\newtheorem{thm}{Theorem}[section]
\newtheorem{lem}[thm]{Lemma}
\newtheorem{cor}[thm]{Corollary}
\newtheorem{prop}[thm]{Proposition}
\theoremstyle{definition}
\newtheorem{defi}[thm]{Definition}
\newtheorem{remark}[thm]{Remark}
\theoremstyle{remark}
\numberwithin{equation}{section}
\definecolor{esperance}{rgb}{0.0,0.5,0.0}
\newcommand{\ba}{\mathbf{a}}
\newcommand{\bc}{\mathbf{c}}
\newcommand{\be}{\mathbf{e}}
\newcommand{\bu}{\mathbf{u}}
\newcommand{\bv}{\mathbf{v}}
\newcommand{\bw}{\mathbf{w}}
\newcommand{\bx}{\mathbf{x}}
\newcommand{\by}{\mathbf{y}}
\newcommand{\bz}{\mathbf{z}}
\newcommand{\ortho}[1]{\mathrm{O}_{#1} (K_\nu)}
\newcommand{\sortho}[1]{\SO_{#1} (K_\nu)}
\DeclareMathOperator{\Card}{Card}
\newcommand{\eps}{\epsilon}
\newcommand{\cO}{\mathcal{O}}
\newcommand{\bP}{\mathbb{P}}
\newcommand{\bR}{\mathbb{R}}
\newcommand{\bZ}{\mathbb{Z}}
\newcommand{\bF}{\mathbb{F}}
\newcommand{\bN}{\mathbb{N}}
\newcommand{\SL}{\operatorname{SL}}
\newcommand{\SO}{\operatorname{SO}}
\newcommand{\GL}{\operatorname{GL}}
\newcommand\mb[1]{\mathbf{#1}}
\newcommand{\onto}{\xymatrix{\ar@{>>}[r]&}}
\newcommand{\eq}[1]
{
\begin{equation*}
{#1}
\end{equation*}
}
\newcommand{\eqlabel}[2]
{
\begin{equation}
{#2}\label{#1}
\end{equation}
}
\newcommand*{\rom}[1]{\expandafter\@slowromancap\romannumeral #1@}
\begin{document}

\title[The submodularity of the covolume function in function fields]{The submodularity of the covolume function in global function fields}

\date{}

\author{Gukyeong Bang}
\address{Gukyeong ~Bang. Department of Mathematical Sciences, Seoul National University, {\it gukyeong.bang@snu.ac.kr}}

\thanks{}

\keywords{}

\def\thefootnote{}
\footnote{{\textbf{Keywords:} function fields, lattices, submodularity. \it 2020 Mathematics
Subject Classification}: Primary 37P15 ; Secondary 11J13.}   
\def\thefootnote{\arabic{footnote}}
\setcounter{footnote}{0}

\begin{abstract}
In this paper, we study the submodularity of the covolume function in global function fields. The submodular property is often needed in the study of homogeneous dynamics, especially to define a Margulis function. We prove that the covolume function is submodular when the class group of the global function field is trivial. 
\end{abstract}

\maketitle

\section{Introduction}
Let $\mathcal{P}(V)$ be a collection of subspaces of a vector space $V$. A function $\phi :\mathcal{P}(V) \rightarrow \bR_{\geq 0}$ is called \textit{submodular} if
\eq{
	\phi(W_1) + \phi(W_2) \geq \phi(W_1 + W_2 ) + \phi(W_1 \cap W_2)
} for any $W_1, W_2 \in \mathcal{P}(V)$.
The submodular property of the covolume function often appears in the study of homogeneous dynamics. 
For example, the submodularity is necessary to construct a \textit{Margulis function}, which guarantees that most of the points in $\SL_n(\bR) / \SL_n(\bZ)$ tend not to be close to the cusp on average (see \cite{EMM}). 
Another example is the \textit{Harder-Narasimhan filtration}, which is a critical tool in the study of parametric geometry (see \cite{HN} and \cite{Sol}).
Hence, proving the submodularity in global function fields will be the first step for several problems of homogeneous dynamics, Teichmüller dynamics, and Diophantine approximations.

Let us first introduce the function field setting we will consider. 
Let $K$ be a global function field over a finite field $\bF_q$ of $q$ elements for a prime power $q$, that is, the function field of a geometrically connected smooth projective curve $\mb{C}$ over $\bF_q$. 
We fix a discrete valuation $\nu$ on $K$ and denote by $K_\nu$ and $\cO_\nu$ the completion of $K$ with respect to $\nu$ and its valuation ring, respectively. 
Fix an element $\pi_\nu\in K$ such that $\nu(\pi_\nu)=1$ (called a \textit{uniformizer}) and denote the residual field by $k_\nu = \cO_\nu /\pi_\nu \cO_\nu$ and $q_\nu = \Card{(k_\nu)}$. 
The absolute value $|\cdot|$ associated with $\nu$ is defined by $|x|=q_\nu^{-\nu(x)}$ for any $x\in K_\nu$. 
Note that $K_\nu$ is isomorphic to $k_\nu(( \pi_\nu ))$, the formal Laurent series with the variable $\pi_\nu$, and the absolute value of $\sum_{i=-\ell}^{\infty} a_i \pi_\nu^i$ is $q_\nu^\ell$ (see \cite{BPP} for instance). 

Let $R_\nu$ be the affine algebra of affine curve $\mb{C} - \{\nu\}$. Note that $R_\nu$ is discrete in $K_\nu$ and plays a similar role to the set of integers $\bZ$ in $\bR$.
We remark that $R_\nu$ is a Dedekind domain in general, but \textit{we assume that $R_\nu$ is a principal ideal domain}, that is, the ideal class group of $R_\nu$ is trivial (see \cite[II.2]{Ser} and \cite[Chapter 1]{Nar}). The standard example is $\mb{C} = \bP^1$ with the standard point at infinity $\nu = [1 : 0]$. In this case, we have $K = \bF_q (T)$, $K_\nu = \bF_q ((T^{-1}))$, $R_\nu = \bF_q [T]$, and $k_\nu = \bF_q$ with the uniformizer $T^{-1}$.

We define the max norm on the exterior algebra of $K_\nu^n$. Let $\be_1, \dots, \be_n$ be the standard basis of $K_\nu^n$.
Denote $\llbracket d_1, d_2 \rrbracket = \{d_1, d_1+1, d_1+2, \dots, d_2\}$ for each $d_1, d_2\in \bZ$ satisfying $d_1 \leq d_2$.
Given $i \in \llbracket 1, n \rrbracket$, we define
\eq{
	\wp^{n}_{i} = \{I \in \llbracket 1, n \rrbracket : \Card{(I)} = i \}.
} Given $I = \{\iota_1, \dots, \iota_i\} \in \wp^{n}_{i}$ and $\bv_{\iota_1}, \dots, \bv_{\iota_i} \in K_\nu^n$, denote $\bv_I = \bv_{\iota_1} \wedge \dots \wedge \bv_{\iota_i}$.
As $\{ \be_I \}_{I \in \wp^{n}_{i}}$ is the standard basis of $\bigwedge^i K_\nu^n$, define the norm on $\bigwedge^{i} K_\nu^n$ by
\eq{
	\| \bv \| = \max_{I \in \wp^{n}_{i}}|c_I|
} given $\bv = \sum_{I \in \wp^{n}_{i}}c_I \be_I \in \bigwedge^{i} K_\nu^n$.

Let $n \in \bN$. Let $V$ be a subspace of $K_\nu^{n}$ and $\Delta$ be a discrete $R_\nu$-module contained in $V$.
We say that $\beta$ is a \textit{$R_\nu$-basis} of $\Delta$ if $\beta$ is a basis of $\Delta$ over $R_\nu$ and is a linearly independent set over $K_\nu$.
The \textit{rank} rk($\Delta$) is defined by the dimension of $\text{span}_{K_\nu}(\Delta)$ as a vector space over $K_\nu$. 
$\Delta$ is called a \textit{lattice} in $V$ if $\Delta$ has a $R_\nu$-basis and $\textrm{rk}(\Delta) = \dim_{K_\nu}(V)$. We will skip writing the containing vector space if it is not essential.
\begin{defi} 
	Let $V$ be a subspace of $K_{\nu}^{n}$ and $\Delta$ be a lattice in $V$. 
A subspace $W$ of $V$ is called $\Delta$-\textit{rational} if $W \cap \Delta $ is a lattice in $W$.
\end{defi}
\begin{defi}
	Let $V$ be a subspace of $K_{\nu}^{n}$ and $\Delta$ be a lattice in $V$. 
	Given a $\Delta$-rational subspace $W$ of $V$,  define 
	\eq{
		d_{\Delta}(W) = \|\bw_1 \wedge \cdots \wedge \bw_i\|,
	} where $\{\bw_1, \dots, \bw_i\}$ is any $R_\nu$-basis of $W \cap \Delta$ and $d_{\Delta}(\{0\}) := 1$.
\end{defi}
Denote by $M_{m,n}(D)$ the set of $m \times n$ matrices over a ring $D$.
\begin{remark}
	$d_\Delta$ is well-defined: Let $\{\bw_1, \dots, \bw_i\}$ and $\{\bw'_1, \dots, \bw'_i\}$ be $R_\nu$-bases of $W \cap \Delta$.
	Denote by $M_1$ and $M_2$ matrices with columns $\bw_1, \dots, \bw_i$ and $\bw'_1, \dots, \bw'_i$, respectively.
	Then there exist $A, B \in M_{i,i}(R_\nu)$ such that $M_1=M_2 A$ and $M_2=M_1 B$, which implies $AB=BA=1$. Thus, we have $\det{A} \in R_\nu^\times$, which implies $|\det{A}| = 1$ (\cite[Section 14]{BPP}). 
	Hence, $\|\bigwedge_{j=1}^{i} \bw_j\| = |\det{A}|\|\bigwedge_{j=1}^{i} \bw'_j\| = \|\bigwedge_{j=1}^{i} \bw'_j\|$.
\end{remark}
In the real case, $d_\Delta(W)$ is the covolume of $W \cap \Delta$. However, there is a normalization issue in the case of function fields. If we define the covolume of a lattice $\Lambda$ in $K_\nu^n$ by $\mathrm{Covol}_n\,(\Lambda) = \mathrm{Vol}_n\,(K_\nu^n / \Lambda)$, where $\mathrm{Vol}_n$ is the Haar measure on $K_\nu^n$, then it is known that
\eq{
\mathrm{Covol}_n\,(R_\nu^n) = q^{(g-1)n},
} where $g$ is the genus of curve $\mb{C}$ (see \cite[Lemma 14.4]{BPP}). Thus, it can be easily deduced that for any lattice $\Delta$ in $K_\nu^n$, we have
\eq{
	\mathrm{Covol}_n\,(\Delta) = \mathrm{Covol}_n\,(R_\nu^n) d_{\Delta}(K_\nu^n).
}
From the above equality, it is natural to define the \textit{covolume} of a lattice $\Lambda$ in $V (\subseteq K_\nu^n)$ by
\eq{
	\mathrm{Covol}\,(\Lambda) = \mathrm{Covol}_{\dim(V)}\,(R_\nu^{\dim(V)}) d_{\Lambda}(V) = q^{(g-1) \dim(V)} d_{\Lambda}(V).
}

The main theorem is the submodularity of $\log{\mathrm{Covol}}$, which is an analogue of \cite[Lemma 5.6]{EMM} for global function fields.
\begin{thm}\label{emm_lem_5.6}
	Assume that $R_\nu$ is a principal ideal domain. Let $\Delta$ be a lattice in $K_{\nu}^{n}$. Then for any $\Delta$-rational subspaces $L$ and $M$ of $K_\nu^n$,
	\eq{
		d_{\Delta}(L)d_{\Delta}(M) \geq d_{\Delta}(L \cap M)d_{\Delta}(L + M),
	} or equivalently,
	\eq{
	\quad \mathrm{Covol}\,(L \cap \Delta) \mathrm{Covol}\,(M \cap \Delta) \geq \mathrm{Covol}\,(L \cap M \cap \Delta) \mathrm{Covol}\,((L+M) \cap \Delta).
	}
\end{thm}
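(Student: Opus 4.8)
The plan is to reduce the inequality $d_{\Delta}(L)d_{\Delta}(M) \ge d_{\Delta}(L\cap M)d_{\Delta}(L+M)$ to a purely multilinear-algebraic inequality for the max norm on $\bigwedge^{\bullet} K_\nu^n$, and then to prove that inequality using only two elementary facts: the ultrametric submultiplicativity $\|\eta\wedge\zeta\| \le \|\eta\|\,\|\zeta\|$ (immediate from the strong triangle inequality applied coefficientwise) and the $\GL_n(\cO_\nu)$-invariance of the norm (since $\GL_n(\cO_\nu)$ preserves $\bigwedge^{i}\cO_\nu^n$, the unit ball of the max norm). Throughout I write $a=\dim(L\cap M)$, $p=\dim L-a$, and $r=\dim M-a$, so that $\dim(L+M)=a+p+r$.

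First I would exploit the principal-ideal-domain hypothesis to build compatible $R_\nu$-bases. Since $R_\nu$ is a PID and $(L\cap M)\cap\Delta$ is a direct summand of $L\cap\Delta$ (the quotient is torsion-free because $L\cap M$ is a $K_\nu$-subspace, hence free over $R_\nu$), I can choose an $R_\nu$-basis $\bu_1,\dots,\bu_a$ of $(L\cap M)\cap\Delta$ and extend it to bases $\bu_1,\dots,\bu_a,\bv_1,\dots,\bv_p$ of $L\cap\Delta$ and $\bu_1,\dots,\bu_a,\bw_1,\dots,\bw_r$ of $M\cap\Delta$. Setting $\alpha=\bu_1\wedge\cdots\wedge\bu_a$, $\beta=\bv_1\wedge\cdots\wedge\bv_p$, and $\gamma=\bw_1\wedge\cdots\wedge\bw_r$, the definition gives $d_{\Delta}(L\cap M)=\|\alpha\|$, $d_{\Delta}(L)=\|\alpha\wedge\beta\|$, and $d_{\Delta}(M)=\|\alpha\wedge\gamma\|$. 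The family $\{\bu_i,\bv_j,\bw_k\}$ is a $K_\nu$-basis of $L+M$ generating an $R_\nu$-sublattice $\Lambda'$ of $(L+M)\cap\Delta$; comparing $\Lambda'$ with an honest $R_\nu$-basis of $(L+M)\cap\Delta$ yields a square change-of-basis matrix $C$ over $R_\nu$ with $\|\alpha\wedge\beta\wedge\gamma\|=|\det C|\,d_{\Delta}(L+M)$. Because a nonzero element of $R_\nu$ is regular away from $\nu$, its only possible pole is at $\nu$, so it has absolute value $\ge 1$; hence $|\det C|\ge 1$ and therefore $d_{\Delta}(L+M)\le \|\alpha\wedge\beta\wedge\gamma\|$.

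It then suffices to prove the wedge inequality
\[
\|\alpha\wedge\beta\|\,\|\alpha\wedge\gamma\|\;\ge\;\|\alpha\|\,\|\alpha\wedge\beta\wedge\gamma\|
\]
for a nonzero decomposable $\alpha$, and this is the crux. I would first reduce to the case where $P:=\operatorname{span}_{K_\nu}(\bu_1,\dots,\bu_a)$ is the coordinate subspace $\operatorname{span}(\be_1,\dots,\be_a)$: since $\cO_\nu$ is a DVR, the lattice $P\cap\cO_\nu^n$ is a direct summand of $\cO_\nu^n$, so some $g\in\GL_n(\cO_\nu)$ carries $\operatorname{span}(\be_1,\dots,\be_a)$ onto $P$, and applying $g^{-1}$ changes none of the norms in the display. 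After this reduction $\alpha=c\,\be_1\wedge\cdots\wedge\be_a$; writing each $\bv_j=\bv_j'+\bv_j''$ with $\bv_j'\in P$ and $\bv_j''\in\operatorname{span}(\be_{a+1},\dots,\be_n)$, the relations $\alpha\wedge\bv_j'=0$ collapse the expansion to $\alpha\wedge\beta=\alpha\wedge\beta''$ with $\beta''=\bv_1''\wedge\cdots\wedge\bv_p''$; and because $\beta''$ involves only the last $n-a$ coordinates, wedging with $\be_1\wedge\cdots\wedge\be_a$ preserves the max norm, giving the projection formula $\|\alpha\wedge\beta\|=\|\alpha\|\,\|\beta''\|$, and likewise $\|\alpha\wedge\gamma\|=\|\alpha\|\,\|\gamma''\|$ and $\|\alpha\wedge\beta\wedge\gamma\|=\|\alpha\|\,\|\beta''\wedge\gamma''\|$. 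Submultiplicativity $\|\beta''\wedge\gamma''\|\le\|\beta''\|\,\|\gamma''\|$ now yields the wedge inequality, and combining it with the bound from the previous paragraph gives $d_{\Delta}(L)d_{\Delta}(M)\ge d_{\Delta}(L\cap M)\,d_{\Delta}(L+M)$. The equivalent covolume form follows immediately from $\mathrm{Covol}(W\cap\Delta)=q^{(g-1)\dim W}d_{\Delta}(W)$ together with $\dim L+\dim M=\dim(L\cap M)+\dim(L+M)$, which makes the powers of $q$ cancel.

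I expect the main obstacle to be the wedge inequality itself, specifically finding a clean non-archimedean substitute for the orthogonal-projection argument used over $\bR$. The device that unlocks it is moving $P$ to a coordinate subspace by an element of $\GL_n(\cO_\nu)$: this is exactly what makes the projection formula and the submultiplicativity estimate available at the same time, so that the whole inequality reduces to the trivial case $a=0$ inside $\operatorname{span}(\be_{a+1},\dots,\be_n)$.
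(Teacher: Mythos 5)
Your argument is correct, and its overall architecture matches the paper's: factor out $L\cap M$ by a norm-preserving change of frame, then finish with Hadamard-type submultiplicativity and the bound $|\det C|\ge 1$ for a change of basis over $R_\nu$ (justified correctly, since $R_\nu\cap\pi_\nu\cO_\nu=\{0\}$). The implementation, however, is genuinely leaner in several places. Where the paper constructs an orthonormal basis of $L\cap M$ by the two-claim induction of Proposition~\ref{exist_ortho} and extends it via Lemma~\ref{basis_to_so}, you obtain the same normal form by noting that $P\cap\cO_\nu^n$ is a direct summand of $\cO_\nu^n$ over the discrete valuation ring $\cO_\nu$, so some $g\in\GL_n(\cO_\nu)=\ortho{n}$ carries a coordinate subspace onto $P$; combined with the norm invariance of Lemma~\ref{norm_inv}, this is a shorter route to the projection formula $\|\alpha\wedge\beta\|=\|\alpha\|\,\|\beta''\|$. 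You also work directly with wedge products rather than with the quotient lattice $\pi_{\alpha,\beta}(\Delta)$, which spares you the verifications that $\pi_{\alpha,\beta}(\Delta)$ is discrete and a lattice and that $\pi_{\alpha,\beta}(H)$ is $\pi_{\alpha,\beta}(\Delta)$-rational (Lemma~\ref{primitive_extension_1} and Claim~1 of the paper's proof). Your coefficientwise proof of $\|\eta\wedge\zeta\|\le\|\eta\|\,\|\zeta\|$ is moreover more general than Corollary~\ref{hadamard}, which is stated only for decomposable vectors. Finally, the $\Delta$-rationality of $L+M$ falls out of your setup for free, since $\operatorname{span}_{R_\nu}\{\bu_i,\bv_j,\bw_k\}$ already has rank $\dim(L+M)$ inside $(L+M)\cap\Delta$.

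The one genuine gap is at the very first step: you assume that $(L\cap M)\cap\Delta$ admits an $R_\nu$-basis of cardinality $a=\dim(L\cap M)$, i.e.\ that $L\cap M$ is $\Delta$-rational. This is not automatic from the rationality of $L$ and $M$ (a priori $\mathrm{rk}\bigl((L\cap M)\cap\Delta\bigr)$ could be smaller than $a$, in which case $d_\Delta(L\cap M)$ is not even defined and your $\alpha$ would be a wedge of too few vectors). The paper devotes Lemma~\ref{rational_operation} to exactly this point, proving $\mathrm{rk}(L\cap M\cap\Delta)=\dim L+\dim M-\dim(L+M)$ via a rank--nullity argument for the map $L\cap\Delta\to\bigl((L+M)\cap\Delta\bigr)/(M\cap\Delta)$, whose target is free by Proposition~\ref{primitive_extension}. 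You should either cite that lemma or insert the corresponding argument; everything downstream of it in your write-up is sound.
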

We remark that the above inequalities are equivalent since
\eq{
	\begin{split}
	\mathrm{Covol}\,(R_\nu^{\dim(L)}) \mathrm{Covol}\,(R_\nu^{\dim(M)}) 
	&= q^{(g-1)(\dim(L) + \dim(M))} \\
	&= \mathrm{Covol}\,(R_\nu^{\dim(L+M)}) \mathrm{Covol}\,(R_\nu^{\dim(L \cap M)}).
	\end{split}
}

The submodular property is necessary for several theorems. 
For instance, it is used for the key inequality to define a Margulis function by Kadyrov, Kleinbock, Lindenstrauss and Margulis in the real case:
given $t \in \bR$ and $s \in M_{m,n}(\bR)$, denote 
\eq{
	g_t = \left(\begin{matrix} e^{\frac{t}{m}} I_m & \\ & e^{\frac{-t}{n}} I_n \end{matrix} \right) \quad\text{and}\quad 
	u_s = \left(\begin{matrix} I_m & s \\ & I_n \end{matrix} \right).
} Then there exists $c_0 > 0$ such that given any $t \geq 1$, one can choose $\omega > 0$ such that 
for any lattice $\Delta \in \SL_n(\bR) / \SL_n(\bZ)$ and $i \in \llbracket 1, m+n-1 \rrbracket$,
\eq{
\begin{split}
	\int_{M_{m,n}(\bR)} \alpha_i (g_t u_s \Delta) d \rho(s) &\leq c_0 t e^{-mnt} \alpha_i (\Delta)^{\beta_i} \\
	&+ \omega^{2 \beta_i} \max_{0 < j \leq \min\{ m+n-i, i \}} (\sqrt{\alpha_{i+j}(\Delta) \alpha_{i-j}(\Delta) })^{\beta_i},
\end{split}
} 
where $\alpha_i(\Delta) = \max\{d_{\Delta}(H)^{-1} : \dim(H)=i \}$, $\beta_i = \frac{m}{i}$ for $i \leq m$ and $\beta_i = \frac{n}{m+n-i}$ otherwise, and $\rho$ is the Gaussian distribution.
This inequality explains that when we have an $i$-dimensional subspace $H$ with small $d_{\Delta}(H)$, 
then the $u_s$-average of $\alpha_i (g_t u_s \Delta)$ is bounded by the exponential speed. 
There may be no subspace for dimension $i$, 
but we can still bound it by the second term obtained by the submodularity.
It is known that the above inequality implies that there exists a height function $\tilde{\alpha} = \sum_{i=0}^{m+n} \omega_i \alpha_i$ such that
\eq{
	\int_{M_{m,n}(\bR)} \tilde{\alpha}(g_t u_s \Delta) d \rho(s) \leq 3 c_0 t e^{-mnt} \tilde{\alpha}(\Delta)
} for any $\Delta$ with sufficiently large $\tilde{\alpha}(\Delta)$ depending on $t$. As the height function measures how far a lattice is from the cusp, 
we can bound the trajectory of $g_t u_s \Delta$ using the exponential term (see \cite{KKLM}). In a subsequent paper \cite{BKL}, we obtain an analogous inequality for global function fields and calculate an upper bound of the Hausdorff dimension of singular matrices in function fields.

The submodularity is also used to define the Harder-Narasimhan filtration. 
Given a lattice $\Delta \in \SL_n(\bR) / \SL_n(\bZ)$, let 
\eq{
	S_{\Delta} = \{p_\Gamma := (\text{rk}(\Gamma), \log{\text{Covol} \ (\Gamma)} ) \in \bR^2 : \Gamma \text{ is a sublattice of }\Delta  \}.
} If we denote the extreme points of 
the convex hull of $S_{\Delta}$ by $p_0, \dots, p_k$ ordering by the first coordinate, 
the submodularity implies that for each $i \in \llbracket 0, k \rrbracket$, 
there exists unique $\Delta$-subspace $H_i$ such that $p_i = p_{H_i \cap \Delta}$. 
The filtration $\{0\} = H_0 \cap \Delta < \dots < H_k \cap \Delta = \Delta$ is called the Harder-Narasimhan filtration. 
As we have $\alpha_{\dim(H_i)} (\Delta)= d_\Delta(H_i)^{-1}$ by the extremity, this filtration gives the information of how far $\Delta$ is from the cusp. 
Hence, by observing the filtration of $g_t \Delta$ along time $t$, we can track how often $g_t \Delta$ is close to the cusp in $t \in [0, \infty)$. 
It is proven that there always exists a piecewise linear map approximating the filtration, called \textit{template (or system)}, 
and thus, we can study Diophantine approximations by drawing suitable piecewise linear maps. This approximation theory is called the parametric geometry of numbers (see \cite{Sol}, \cite{DFSU} or \cite{Sax}).

We expect that by Theorem \ref{emm_lem_5.6}, we can prove the analogous theorems above in global function fields.

\noindent \textit{Acknowledgements}. We would like to thank Seonhee Lim and Taehyeong Kim for the helpful discussions. 
This work was supported by the National Research Foundation of Korea (NRF) grant funded by the Korea government (MSIT) (No.2020R1A2C1A01011543).
\section{The Orthogonality in global function fields}
We use the orthogonality for global function fields to prove Theorem \ref{emm_lem_5.6}. 
The orthogonality and corresponding properties in ultrametric space can be found in several references (see \cite[Chapter II]{Wei}, \cite{KlST}, or \cite{PR} for example). 
We organized the properties necessary for our setting with original proofs.
We start by defining the orthogonality in non-archimedean vector spaces using Parseval's identity.
\begin{defi}\label{def_ortho}
Let $V$ be a vector space over $K_\nu$. A subset $S = \{\bv_1, \dots, \bv_i\}$ of $V$ is called \textit{orthogonal} if we have
\eq{
\|{a_1}{\bv_1}+ \dots +{a_i}{\bv_i}\| = \text{max}(|a_1|{\|\bv_1\|}, \dots, |a_i|{\|\bv_i\|})
} for any $a_1, \dots, a_i \in K_\nu$. An orthogonal set $S$ is called \textit{orthonormal} if $\|\bv\| = 1$ for any $\bv \in S$. 
\end{defi}
Note that for each element in the orthogonal group $\mathrm{O}_n(\bR)$, its column vectors form an orthonormal basis in $\bR^{n}$.
A similar group exists in non-archimedean vector spaces. 
Recall that $M_{m,n}(D)$ the set of $m \times n$ matrices whose entries are in a ring $D$.

\begin{defi}
Let $n \in \bN$. The \textit{(non-archimedean) orthogonal group} is defined by
\eq{
	\ \ortho{n} = \{A \in M_{n,n}(\cO_\nu) : | \det{A} | = 1\},
} and the \textit{special orthogonal group} is defined by
\eq{
	\ \sortho{n} = \ortho{n} \cap \SL_n(K_\nu)  = \SL_n(\cO_\nu).
}
\end{defi}
\begin{remark}
$\ortho{n}$ is a group. Given $A \in \ortho{n}$, we have $A^{-1} = (\det{A})^{-1} \mathrm{adj}\,(A)$, where $\mathrm{adj}\,(A)$ is the adjugate matrix of $A$.
Since $A \in M_{n,n}(\cO_\nu)$, we have $\mathrm{adj}\,(A) \in M_{n,n}(\cO_\nu)$ by the ultrametric property. 
Since $|\det{A}| = 1$, we have $A^{-1} \in \ortho{n}$.
\end{remark}
\begin{lem}\label{ortho_group}
For each $g \in \ortho{n}$, $\{g\be_1, \dots, g\be_n\}$ is an orthonormal basis of $K_{\nu}^{n}$.
Conversely, given an orthonormal basis  $\{\bv_1, \dots, \bv_n\}$ of $K_{\nu}^{n}$, the matrix $V$ with columns $\bv_1, \dots, \bv_n$ is in $\ortho{n}$. 
\end{lem}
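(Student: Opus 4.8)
The plan is to deduce both directions from a single observation: a matrix in $M_{n,n}(\cO_\nu)$ acts on $K_\nu^n$ without increasing the max norm, and an element of $\ortho{n}$ acts as an isometry. First I would record the basic estimate. If $A \in M_{n,n}(\cO_\nu)$ and $\bv \in K_\nu^n$, then the $i$-th coordinate of $A\bv$ is $\sum_j A_{ij} v_j$, so by the ultrametric inequality together with $|A_{ij}| \le 1$ we get $|(A\bv)_i| \le \max_j |v_j| = \|\bv\|$, hence $\|A\bv\| \le \|\bv\|$. Now for the forward direction, let $g \in \ortho{n}$. By the Remark preceding the lemma we also have $g^{-1} \in \ortho{n}$, so both $g$ and $g^{-1}$ are norm-nonincreasing; applying the estimate to $g^{-1}(g\bv)$ gives $\|\bv\| \le \|g\bv\|$, and combined with $\|g\bv\|\le\|\bv\|$ this shows $g$ is an isometry. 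Since the standard basis is orthonormal straight from the definition of the max norm, namely $\|\sum_j a_j \be_j\| = \max_j |a_j|$ and $\|\be_j\| = 1$, applying the isometry $g$ yields $\|\sum_j a_j\, g\be_j\| = \|g(\sum_j a_j \be_j)\| = \max_j |a_j| = \max_j |a_j|\,\|g\be_j\|$, which is exactly orthonormality of $\{g\be_1,\dots,g\be_n\}$.

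For the converse, let $\{\bv_1,\dots,\bv_n\}$ be an orthonormal basis and let $V$ be the matrix with these columns. Since $\|\bv_j\| = 1$, every coordinate of $\bv_j$ lies in $\cO_\nu$, so $V \in M_{n,n}(\cO_\nu)$ and $\det V \in \cO_\nu$, giving $|\det V| \le 1$. The orthonormality condition reads $\|V\ba\| = \|\sum_j a_j \bv_j\| = \max_j |a_j|\,\|\bv_j\| = \|\ba\|$, so $V$ is an isometry, and it is invertible because $\{\bv_j\}$ is a basis. The crux is to upgrade this to $|\det V| = 1$. I would do so by observing that $V^{-1}$ is again an isometry: for any $\bw$, writing $\bw = V(V^{-1}\bw)$ and using that $V$ preserves norms gives $\|V^{-1}\bw\| = \|\bw\|$. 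In particular $\|V^{-1}\be_j\| = 1$ for each $j$, so the columns of $V^{-1}$ also have coordinates in $\cO_\nu$, whence $V^{-1} \in M_{n,n}(\cO_\nu)$ and $|\det V^{-1}| \le 1$. Since $\det V \cdot \det V^{-1} = 1$, both factors must have absolute value exactly $1$, so $|\det V| = 1$ and therefore $V \in \ortho{n}$.

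The only genuine obstacle is this last step of the converse, passing from the norm-preserving property to the determinant condition $|\det V| = 1$. The symmetric treatment of $V$ and $V^{-1}$ through the isometry property is what makes it clean: the point is precisely that a norm-preserving linear map and its inverse both land in $M_{n,n}(\cO_\nu)$. Without noticing that $V^{-1}$ is also norm-preserving, one would be pushed toward an elementary-divisor argument over the valuation ring $\cO_\nu$, which is available since $\cO_\nu$ is a discrete valuation ring but is heavier than the isometry argument used here.
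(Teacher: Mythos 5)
Your proposal is correct and follows essentially the same route as the paper: both directions hinge on the observation that a matrix over $\cO_\nu$ is norm-nonincreasing, that $g$ and $g^{-1}$ (resp.\ $V$ and $V^{-1}$) are therefore both isometries, and that placing $V^{-1}$ in $M_{n,n}(\cO_\nu)$ via $\|V^{-1}\be_j\|=1$ forces $|\det V|=1$ from $\det V\cdot\det V^{-1}=1$. The only cosmetic difference is that the paper bounds $|\det M|\le 1$ via the Leibniz formula and the ultrametric inequality, whereas you simply note $\det V\in\cO_\nu$ because $\cO_\nu$ is a ring; these are the same fact.
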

\begin{proof}
Let $g \in \ortho{n}$ and $\bx = (x_1, \dots, x_n) \in K_{\nu}^{n}$. Then we have
\eqlabel{Eq_Ortho_1}{
	\|g \bx\| = \|\sum_{i=1}^{n} x_i g\be_i\| \leq \max_{1 \leq i \leq n}(\|x_i g\be_i\|) \leq \max_{1 \leq i \leq n}(|x_i|) = \| \bx\|,
}
where the last inequality holds since $g \in M_{n,n}(\cO_\nu)$.
Since $\ortho{n}$ is a group, by applying $g^{-1} \in \ortho{n}$ and $g \bx \in K_{\nu}^{n}$ to (\ref{Eq_Ortho_1}) again, we have $\| \bx\| = \|g^{-1}(g \bx)\| \leq \|g \bx\|$, which implies
\eqlabel{Eq_Ortho_2}{
\|g \bx\| = \| \bx \|
} for any $\bx \in K_{\nu}^{n}$.

Let $\ba = (a_1, \dots, a_n) \in K_\nu^n$. Then
\eq{
	\|\sum_{i=1}^{n} a_i g\be_i\| = \|g \ba\| = \| \ba\| = \max_{1 \leq i \leq n}(\|a_i \be_i\|)
	= \max_{1 \leq i \leq n}(\|a_i g\be_i\|)
} where the second and fourth equalities hold by \eqref{Eq_Ortho_2}. Hence, $\{g\be_1, \dots, g\be_n\}$ is an orthonormal basis of $K_{\nu}^{n}$.

Conversely, let $\beta = \{\bv_1, \dots, \bv_n\}$ be an orthonormal basis of $K_{\nu}^{n}$ and let $V = (v_{ij})$ be the matrix with columns $\bv_1, \dots, \bv_n$.
Then $V \in M_{n,n}(\cO_\nu)$ since $\|\bv_i\| = 1,$ $\forall i \in \llbracket 1, n \rrbracket$.
Observe that for any $M = (m_{ij}) \in M_{n,n}(\cO_\nu)$, we have from the Leibniz formula for the determinant that
\eqlabel{Eq_Ortho_3}{
	|\det{M}| = |\sum_{\sigma \in S_{n}}\text{sgn}(\sigma) \prod_{i=1}^{n} m_{i \sigma(i)} | \leq \max_{\sigma \in S_{n}}|\text{sgn}(\sigma) \prod_{i=1}^{n} m_{i \sigma(i)} | \leq 1.
}
Let $\bx = (x_1, \dots, x_n) \in K_\nu^n$. By the orthogonality of $\beta$, we have
\eq{
	\|V \bx\| 
	= \|x_1 \bv_1 + \dots + x_n \bv_n\|
	= \max(\|x_1 \bv_1\|, \dots, \|x_n \bv_n\|).
} Since $\|\bv_{i}\| = 1, \forall i \in \llbracket 1, n \rrbracket$, we have
\eqlabel{Eq_Ortho_4}{
	\|V \bx\| = \max(|x_1|, \dots, |x_n|) =\| \bx\|.
}
Applying $\bx = V^{-1} \be_i$ to \eqref{Eq_Ortho_4}, we obtain $\|V^{-1} \be_i\| = 1, \forall i \in \llbracket 1, n \rrbracket$, which implies $V^{-1} \in M_{n,n}(\cO_\nu)$. Applying $V, V^{-1} \in M_{n,n}(\cO_\nu)$ to (\ref{Eq_Ortho_3}), we have $|\det{V}| = 1$. Therefore, we have $V \in \ortho{n}$.
\end{proof}
The following lemma says that the non-archimedean norm is invariant under $\ortho{n}$.

\begin{lem}\label{norm_inv}
For any $g \in \ortho{n}$ and $\bv \in \bigwedge K_{\nu}^{n}$, we have
$\|g.\bv\| = \| \bv\|$.
\end{lem}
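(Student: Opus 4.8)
The plan is to reduce the statement to each homogeneous component and then mimic the proof of Lemma \ref{ortho_group}, replacing $g$ by its induced action on the exterior power. Since the action of $g$ preserves the grading $\bigwedge K_\nu^n = \bigoplus_{i=0}^n \bigwedge^i K_\nu^n$ and the max norm is defined degree by degree, it suffices to fix $i$ and work on $\bigwedge^i K_\nu^n$. First I would write down the matrix of the map $\bv \mapsto g.\bv$ in the standard basis $\{\be_I\}_{I \in \wp^n_i}$. Writing $g\be_j = \sum_k g_{kj}\be_k$ for the $j$-th column of $g$ and expanding $g.\be_J = (g\be_{j_1}) \wedge \cdots \wedge (g\be_{j_i})$ for $J = \{j_1 < \cdots < j_i\}$, the coefficient of $\be_I$ is exactly the minor $\det(g_{I,J})$, where $g_{I,J}$ denotes the $i \times i$ submatrix of $g$ with rows $I$ and columns $J$. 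Thus the action of $g$ on $\bigwedge^i K_\nu^n$ is represented by the matrix of all $i \times i$ minors of $g$.

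Next I would observe that every such minor lies in $\cO_\nu$: since $g \in M_{n,n}(\cO_\nu)$, the submatrix $g_{I,J}$ also has entries in $\cO_\nu$, so $|\det(g_{I,J})| \le 1$ by the Leibniz-formula estimate already recorded in \eqref{Eq_Ortho_3}. Hence the matrix representing $\bv \mapsto g.\bv$ has all its entries in $\cO_\nu$, and for any such matrix $M$ and any $\bx = \sum_J x_J \be_J$ the ultrametric inequality gives $\|M\bx\| = \max_I |\sum_J M_{IJ} x_J| \le \max_{I,J} |M_{IJ}|\,|x_J| \le \|\bx\|$. Applying this to $M = \bigwedge^i g$ yields the one-sided bound $\|g.\bv\| \le \|\bv\|$ for every $\bv \in \bigwedge^i K_\nu^n$, the exact analogue of \eqref{Eq_Ortho_1}.

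Finally I would upgrade the inequality to an equality by the same group-theoretic trick used to pass from \eqref{Eq_Ortho_1} to \eqref{Eq_Ortho_2}. By the remark preceding Lemma \ref{ortho_group}, $g^{-1} \in \ortho{n}$, and functoriality of the $i$-th exterior power gives $g^{-1}.(g.\bv) = \bv$. Applying the one-sided bound both to $g$ and to $g^{-1}$ (the latter evaluated at $g.\bv$) produces $\|\bv\| = \|g^{-1}.(g.\bv)\| \le \|g.\bv\| \le \|\bv\|$, which forces $\|g.\bv\| = \|\bv\|$. I do not expect a genuine obstacle here; the only point requiring care is the combinatorial bookkeeping in the minor expansion that identifies $\bigwedge^i g$ with the matrix of $i \times i$ minors. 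An alternative route would be to prove directly that the wedges $\{\bv_I\}_{I \in \wp^n_i}$ of an orthonormal basis $\{\bv_1, \dots, \bv_n\}$ form an orthonormal basis of $\bigwedge^i K_\nu^n$ and invoke Lemma \ref{ortho_group}, but this seems to require more work than the minor description above.
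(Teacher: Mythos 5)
Your proposal is correct and follows essentially the same route as the paper: expand $g.\be_J$ in the standard basis of $\bigwedge^i K_\nu^n$ to identify the coefficients with $i\times i$ minors of $g$, bound each minor by $1$ via the ultrametric/Leibniz estimate to get $\|g.\bv\|\le\|\bv\|$, and then apply the same bound to $g^{-1}\in\ortho{n}$ to force equality. No gaps.
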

\begin{proof}
Let $i \in \llbracket 1, n \rrbracket$. 
For any pair $I, J \in \wp^{n}_{i}$, we denote by $\det_{I, J}g$ the ($I, J$)-minor of $g$, that is, the determinant of the $(i, i)$-submatrix of $g$ containing those rows and columns whose indices are in $I, J$, respectively.

Let $\bv = \sum\limits_{I \in \wp^{n}_{i}} c_I \be_I \in \bigwedge^{i}K_{\nu}^{n}$ and $g \in \ortho{n}$. By the ultrametric property,
\eq{
	\|g.\bv\| 
	= \|\sum_{I \in \wp^{n}_{i}} c_I (g.\be_I)\| 
	= \|\sum_{I \in \wp^{n}_{i}} c_I ( \sum_{J \in \wp^{n}_{i}} \mathrm{det}_{I, J}\,g \ \be_J)\|  
	\leq \max_{I, J \in \wp^{n}_{i}} |c_I \mathrm{det}_{I, J}\,g|.
}
Since $|\mathrm{det}_{I, J}\,g| \leq 1$ by $g \in M_{n,n}(\cO_\nu)$ and the ultrametric property, we have
\eqlabel{Eq_Norm_Inv_2}{
	\|g.\bv\| \leq \max_{I, J \in \wp^{n}_{i}} |c_I \mathrm{det}_{I, J}\,g| \leq \max_{I \in \wp^{n}_{i}} |c_I| = \| \bv\|.
}

By applying (\ref{Eq_Norm_Inv_2}) to $g^{-1} \in \ortho{n}$ and $g.\bv \in \bigwedge^{i}K_{\nu}^{n}$ again, 
we obtain $\|\bv\| = \|g^{-1}.(g.\bv)\| \leq \|g.\bv\|$ as well.
\end{proof}
By Lemma \ref{ortho_group} and Lemma \ref{norm_inv}, we have the following corollary.
\begin{cor}\label{onb_norm_inv}
	Let $\beta = \{g_1, \dots, g_n\}$ be an orthonormal basis on $K_{\nu}^{n}$. Let $g$ be the matrix with columns $g_1, \dots, g_n$. Then $\|g.\bv\| = \|\bv\|$ for any $\bv \in \bigwedge K_{\nu}^{n}$.
\end{cor}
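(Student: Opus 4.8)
The plan is to observe that Corollary \ref{onb_norm_inv} is an immediate consequence of combining the two preceding lemmas, with the matrix $g$ serving as the bridge between the language of orthonormal bases and the language of the orthogonal group $\ortho{n}$. The content has genuinely already been established; what remains is only to assemble it.

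First I would invoke the converse direction of Lemma \ref{ortho_group}. By hypothesis $\beta = \{g_1, \dots, g_n\}$ is an orthonormal basis of $K_\nu^n$, and $g$ is defined to be the matrix whose columns are $g_1, \dots, g_n$. Lemma \ref{ortho_group} states precisely that such a matrix lies in $\ortho{n}$, so I obtain $g \in \ortho{n}$ with no further work. This is the only place where the orthonormality of $\beta$ is used, and it is used exactly as packaged in the earlier lemma.

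Second, having certified that $g$ belongs to the orthogonal group, I would apply Lemma \ref{norm_inv} directly. That lemma asserts that every element of $\ortho{n}$ acts as an isometry of the exterior algebra $\bigwedge K_\nu^n$ equipped with the max norm, i.e.\ $\|g.\bv\| = \|\bv\|$ for all $\bv \in \bigwedge K_\nu^n$. Substituting our specific $g$ yields the claimed equality, completing the argument in two lines.

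The only point deserving a word of care, rather than a genuine obstacle, is the passage across homogeneous degrees: the induced action of $g$ respects the grading $\bigwedge K_\nu^n = \bigoplus_{i=0}^{n} \bigwedge^{i} K_\nu^n$, preserving each component $\bigwedge^{i} K_\nu^n$. Lemma \ref{norm_inv} supplies the isometry on each such component for $i \in \llbracket 1, n \rrbracket$, while the degree-zero piece is fixed by $g$ since scalars are untouched and the norm there is just the absolute value. Because the max norm on the full exterior algebra is the maximum of the norms of the homogeneous parts, invariance in each degree forces invariance overall. I therefore anticipate no real difficulty; the corollary is a clean corollary in the strictest sense, and I would present its proof as the direct concatenation of Lemma \ref{ortho_group} and Lemma \ref{norm_inv}.
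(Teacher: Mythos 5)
Your proof is correct and matches the paper exactly: the paper derives Corollary \ref{onb_norm_inv} precisely by combining the converse direction of Lemma \ref{ortho_group} (to get $g \in \ortho{n}$) with Lemma \ref{norm_inv}, offering no further argument. Your additional remark on the degree-zero component of $\bigwedge K_\nu^n$ is a harmless extra precaution, not a divergence.
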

There exists an alternative definition of the orthonormal basis, which can be defined for any subspaces of $K_{\nu}^{n}$. We prepare two lemmas to define it.
\begin{lem}\label{basis_to_so}
	Let $i \in \llbracket 1, n-1 \rrbracket$. Given unit vectors $\bv_1, \dots, \bv_i \in K_{\nu}^{n}$ satisfying $\|\bv_1 \wedge \cdots \wedge \bv_i\| = 1$, 
	we can find $\bv_{i+1}, \bv_{i+2}, \dots, \bv_{n} \in K_{\nu}^{n}$ such that the matrix with columns $\bv_1, \dots, \bv_n$ is in $\sortho{n}$.
\end{lem}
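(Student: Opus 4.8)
The plan is to pass to the residue field $k_\nu = \cO_\nu/\pi_\nu\cO_\nu$, where the hypothesis becomes a statement about linear independence, and then lift a completed basis back to $\cO_\nu$. First I would record that since each $\bv_j$ is a unit vector, all of its coordinates lie in $\cO_\nu$, so the $n \times i$ matrix $M$ with columns $\bv_1, \dots, \bv_i$ has entries in $\cO_\nu$. By the definition of the norm on $\bigwedge^i K_\nu^n$, the hypothesis $\|\bv_1 \wedge \cdots \wedge \bv_i\| = 1$ says precisely that $\max_{I \in \wp^{n}_{i}}|\det M_I| = 1$, where $M_I$ denotes the $i \times i$ submatrix of $M$ on the rows indexed by $I$; hence some minor $\det M_I$ is a unit of $\cO_\nu$.

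Reducing modulo the maximal ideal $\pi_\nu\cO_\nu$, the reduction $\bar M \in M_{n,i}(k_\nu)$ then has a nonzero $i \times i$ minor, namely the image of $\det M_I$, so its columns $\bar\bv_1, \dots, \bar\bv_i$ are linearly independent over $k_\nu$. I would extend them to a basis $\bar\bv_1, \dots, \bar\bv_i, \bar\bw_{i+1}, \dots, \bar\bw_n$ of $k_\nu^n$, choose arbitrary lifts $\bv_{i+1}, \dots, \bv_n \in \cO_\nu^n$ of the $\bar\bw_j$, and form the square matrix $N \in M_{n,n}(\cO_\nu)$ with columns $\bv_1, \dots, \bv_n$. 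Its reduction $\bar N$ is invertible over $k_\nu$, so $\det N \not\equiv 0 \pmod{\pi_\nu}$, that is $|\det N| = 1$ and $N \in \ortho{n}$. Finally, to land in $\sortho{n} = \SL_n(\cO_\nu)$ rather than merely $\ortho{n}$, I would rescale: setting $d = \det N \in \cO_\nu^\times$ and replacing the added column $\bv_n$ by $d^{-1}\bv_n$ (which still lies in $\cO_\nu^n$ since $d^{-1} \in \cO_\nu^\times$) leaves the first $i$ columns untouched and makes the determinant equal to $1$. Since $i \leq n-1$, there is indeed a spare column $\bv_n$ to rescale, and this produces the desired matrix in $\sortho{n}$.

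The only genuinely delicate point is the equivalence used in the first step between the wedge-norm hypothesis and full rank of the reduction $\bar M$; this rests on the fact that an $i \times i$ minor is a unit of $\cO_\nu$ exactly when its reduction is nonzero in $k_\nu$, which is immediate because $\cO_\nu$ is a discrete valuation ring with residue field $k_\nu$. Everything else — extending a linearly independent set to a basis over the field $k_\nu$, lifting to $\cO_\nu$, and rescaling a single column by a unit — is routine, so I do not anticipate a serious obstacle.
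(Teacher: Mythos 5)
Your argument is correct. It reaches the same conclusion by a slightly different route than the paper, which never passes to the residue field: the paper writes $\bv_1 \wedge \cdots \wedge \bv_i = \sum_{I \in \wp^{n}_{i}} c_I \be_I$, picks $I_0$ with $|c_{I_0}| = 1$ (your unit minor), and completes the family explicitly by the standard basis vectors $\be_{\iota}$ for $\iota \in I_0^c$, scaling the last one by $\sigma_{I_0} c_{I_0}^{-1}$ so that the full wedge equals $\be_{\llbracket 1, n \rrbracket}$ exactly; this yields $\det V = 1$ in one stroke, with no separate normalization step. Your route --- reduce modulo $\pi_\nu \cO_\nu$, extend to a basis of $k_\nu^n$, lift, and then fix the determinant by rescaling the spare column --- is somewhat more conceptual and more flexible (it works verbatim over any local ring and shows that \emph{any} completion of the reduced basis will do), at the cost of the extra lifting-and-rescaling bookkeeping. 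The two constructions are in fact compatible: since the $I_0$-minor is a unit, the reductions of $\be_{\iota}$, $\iota \in I_0^c$, already complete $\bar{\bv}_1, \dots, \bar{\bv}_i$ to a basis of $k_\nu^n$, so the paper's explicit choice is one instance of your lifted completion. All the individual steps you flag as delicate (the identification of the wedge norm with the maximal minor, and the equivalence between a minor being a unit and its reduction being nonzero) are indeed valid, so there is no gap.
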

\begin{proof}	
Write $\bv_1 \wedge \dots \wedge \bv_i = \sum_{I \in \wp^{n}_{i}}c_I \be_I$.
Since $\max_{I \in \wp^{n}_{i}}{|c_I|} = \|\bigwedge_{j=1}^{i} \bv_j\| = 1$, there exists $I_0 \in \wp^{n}_{i}$ satisfying $|c_{I_0}|=1$. 
Let $I_0^c = \llbracket 1, n \rrbracket - I_0 = \{\iota_1, \dots, \iota_{n-i}\}$. 
Define $\bv_{i+\alpha} = \be_{\iota_\alpha}$ for $1 \leq \alpha < n-i$ and $\bv_{n} = \sigma_{I_0} c_{I_0}^{-1} \be_{\iota_{n-i}}$, where $\sigma_{I_0} \in \{1, -1\}$ is the constant satisfying $\be_{I_0} \wedge \be_{I_0^c} = \sigma_{I_0} \be_{\llbracket 1, n \rrbracket}$. Then
\eqlabel{basis_to_so_1}{
\begin{split}
	\bigwedge_{j=1}^{n}\bv_j &=(\bigwedge_{j=1}^{i}\bv_j) \wedge (\bigwedge_{j=i+1}^{n}\bv_j)
	= (\sum_{I \in \wp^{n}_{i}}c_I \be_I) \wedge (\sigma_{I_0} c_{I_0}^{-1} \be_{I_0^c}) \\
	&= \sigma_{I_0} c_{I_0} c^{-1}_{I_0} \be_{I_0} \wedge \be_{I_0^c}
	= \be_{\llbracket 1, n \rrbracket}.
\end{split}
}
Let $V$ be the matrix with columns $\bv_1, \dots, \bv_n$. Then $\det{V} = 1$ by \eqref{basis_to_so_1}. 
Since $\|\bv_1\|=\cdots=\|\bv_i\| = 1$, $\|\bv_{i+\alpha}\| = \|\be_{\iota_\alpha}\| = 1$ for 
$1 \leq \alpha < n-i$, and $\|\bv_{n}\| = \|\sigma_{I_0} c_{I_0}^{-1} \be_{\iota_{n-i}}\| = |c_{I_0}|^{-1} = 1$, 
we have $V \in M_{n,n}(\cO_\nu)$. Hence, we have $V \in \sortho{n}$.
\end{proof}
\begin{prop}\label{exist_ortho}
Let $i \in \llbracket 1, n \rrbracket$ and $L$ be an $i$-dimensional subspace of $K_{\nu}^{n}$. 
There exists a normalized basis $\{\bv_1, \dots, \bv_i\}$ of $L$ satisfying $\|\bv_1 \wedge \cdots \wedge \bv_i\| = 1$.
\end{prop}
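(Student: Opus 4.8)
The plan is to realize the desired basis as part of an $\cO_\nu$-basis of the whole lattice $\cO_\nu^n$ and then read off the wedge-norm from the norm-invariance established in Corollary \ref{onb_norm_inv}. Concretely, set $\Lambda = L \cap \cO_\nu^n$, the set of vectors in $L$ whose coordinates all lie in $\cO_\nu$ (equivalently, the vectors of $L$ of norm at most $1$). First I would check that $\Lambda$ is a free $\cO_\nu$-module of rank $i$: it is a submodule of the free module $\cO_\nu^n$ over the discrete valuation ring $\cO_\nu$, hence free, and multiplying any $\bv \in L$ by a large enough power of $\pi_\nu$ lands it in $\Lambda$, so $\Lambda$ spans $L$ over $K_\nu$ and thus has rank $\dim_{K_\nu}(L) = i$.

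The heart of the argument is to show that $\Lambda$ is a direct $\cO_\nu$-summand of $\cO_\nu^n$. For this I would verify that $\Lambda$ is saturated: if $\pi_\nu \bw \in \Lambda$ for some $\bw \in \cO_\nu^n$, then $\bw = \pi_\nu^{-1}(\pi_\nu \bw) \in L$ since $L$ is a subspace, whence $\bw \in L \cap \cO_\nu^n = \Lambda$. Saturation makes the quotient $\cO_\nu^n / \Lambda$ torsion-free, and a finitely generated torsion-free module over the PID $\cO_\nu$ is free; the short exact sequence $0 \to \Lambda \to \cO_\nu^n \to \cO_\nu^n/\Lambda \to 0$ therefore splits, exhibiting $\Lambda$ as a direct summand. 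Consequently any $\cO_\nu$-basis $\{\bv_1, \dots, \bv_i\}$ of $\Lambda$ extends to an $\cO_\nu$-basis $\{\bv_1, \dots, \bv_n\}$ of $\cO_\nu^n$.

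Let $V$ denote the matrix with columns $\bv_1, \dots, \bv_n$. Since these columns form an $\cO_\nu$-basis of $\cO_\nu^n$, we have $V \in \GL_n(\cO_\nu)$, i.e. $V \in M_{n,n}(\cO_\nu)$ with $|\det V| = 1$, so $V \in \ortho{n}$. By Lemma \ref{ortho_group} its columns form an orthonormal basis, giving $\|\bv_j\| = 1$ for every $j$; in particular $\{\bv_1, \dots, \bv_i\}$ is a normalized basis, and it is a basis of $L$ because $\Lambda$ spans $L$. Finally, writing $\bv_1 \wedge \cdots \wedge \bv_i = V.(\be_1 \wedge \cdots \wedge \be_i)$ and applying the norm-invariance of Corollary \ref{onb_norm_inv} (or directly Lemma \ref{norm_inv}) yields $\|\bv_1 \wedge \cdots \wedge \bv_i\| = \|\be_1 \wedge \cdots \wedge \be_i\| = 1$, as required.

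I expect the main obstacle to be the splitting step: one must recognize that the saturation of $\Lambda$ in $\cO_\nu^n$ — which ultimately rests on $L$ being closed under scalar multiplication together with $\cO_\nu$ being a discrete valuation ring — is exactly what upgrades a bare $\cO_\nu$-basis of $\Lambda$ into one extending to the full lattice, and that this is what forces the wedge-norm to be $1$ rather than merely $\leq 1$. Everything after that (membership in $\ortho{n}$ and the wedge-norm computation) is routine given the results already proved. A cleaner, fully self-contained variant that avoids quoting the structure theory would be to put $\Lambda$ into elementary-divisor (Smith) form relative to $\cO_\nu^n$ and observe that saturation forces all elementary divisors to be units; this produces the extended basis directly and could be preferable if one wants to minimize outside references.
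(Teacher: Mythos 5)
Your proof is correct, but it takes a genuinely different route from the paper. The paper proves Proposition \ref{exist_ortho} by induction on $\dim L$: starting from a normalized basis with $\|\bv_1\wedge\cdots\wedge\bv_{k-1}\|=1$ and $\|\bv_1\wedge\cdots\wedge\bv_k\|=q_\nu^{-l}<1$, it explicitly constructs coefficients $\alpha_1,\dots,\alpha_{k-1}$ (via a Laplace expansion argument and a block-determinant identity) so that the corrected vector $\bv_k'=\pi_\nu^{-l}(\alpha_1\bv_1+\cdots+\alpha_{k-1}\bv_{k-1}+\bv_k)$ is a unit vector raising the wedge norm to $1$. Your argument instead passes to the $\cO_\nu$-module $\Lambda=L\cap\cO_\nu^n$, observes that it is saturated in $\cO_\nu^n$, and invokes the structure theory of finitely generated modules over the discrete valuation ring $\cO_\nu$ to split the sequence $0\to\Lambda\to\cO_\nu^n\to\cO_\nu^n/\Lambda\to 0$, so that an $\cO_\nu$-basis of $\Lambda$ extends to one of $\cO_\nu^n$; the extended basis matrix lies in $\ortho{n}$ and Lemma \ref{norm_inv} finishes the computation. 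All the steps check out: $\cO_\nu$ is always a DVR, hence a PID, so no hypothesis on the class group is needed, and Lemmas \ref{ortho_group} and \ref{norm_inv} are proved before this proposition, so there is no circularity. Your route is essentially the $\cO_\nu$-analogue of the paper's own Proposition \ref{primitive_extension}, which the author only deploys later for $R_\nu$; applying the same idea here yields a much shorter and more conceptual proof, at the cost of importing the structure theorem (or Smith normal form), whereas the paper's inductive proof is self-contained, elementary, and effectively algorithmic in that it produces the orthonormal basis by explicit corrections.
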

\begin{proof}
We use induction on the dimension of the subspace. When $i = 1$, the normalized vector $\frac{\bv}{\|\bv\|}$ of any non-zero vector $\bv \in L$ is what we want.

Suppose Proposition \ref{exist_ortho} holds for any ($k-1$)-dimensional subspaces of $K_{\nu}^{n}$, and assume that $L$ is a $k$-dimensional subspace of $K_{\nu}^{n}$.
Pick a basis $\{\bv_1, \dots, \bv_k\}$ of $L$ and Let $L'$ be the subspace of $K_{\nu}^{n}$ generated by $\bv_1, \dots, \bv_{k-1}$. 
Then there exists a normalized basis $\{\bw_1, \dots, \bw_{k-1}\}$ of $L'$ satisfying $\|\bw_1 \wedge \cdots \wedge \bw_{k-1}\|=1$ by the induction hypothesis.
By relabeling $\{\bw_1, \dots, \bw_{k-1}, \frac{\bv_k}{\|\bv_k\|}\}$ into new $\{\bv_1, \dots, \bv_k\}$, 
we may assume that there exists a normalized basis $\beta = \{\bv_1, \dots, \bv_k \}$ of $L$ such that $\|\bv_1 \wedge \dots \wedge \bv_{k-1}\|=1$.
Note that since $\bv_1, \dots, \bv_k$ are unit vectors, we have $\|\bv_1 \wedge \cdots \wedge \bv_{k}\| \leq 1$ by the ultrametric property. 
If $\|\bv_1 \wedge \cdots \wedge \bv_{k}\|=1$, $\beta$ satisfies Proposition \ref{exist_ortho}.

Suppose $\|\bv_1 \wedge \cdots \wedge \bv_{k}\| < 1$. Then there exists $l \in \bN$ such that
\eq{
	\|\bv_1 \wedge \cdots \wedge \bv_{k}\|= q_\nu^{-l}.
}
For each $r \in \llbracket 1, k \rrbracket$ and $J \in \wp_r^n$, let $V_r$ be the $(n,r)$-matrix with columns $\bv_1, \dots, \bv_r$, and $V_r^J$ be the $(r, r)$-submatrix of $V_r$ containing the rows whose indices are in $J$.
Since $\|\bv_j\| = 1,  \forall j \in \llbracket 1, r \rrbracket$, we have $|\det{V_r^J} | \leq 1$.
Let $D_{k-1} = \{J \in \wp_{k-1}^n : |\det V_{k-1}^J| = 1  \}$. Note that $D_{k-1}$ is non-empty since $\|\bv_1 \wedge \cdots \wedge \bv_{k-1}\|=1$. We write $V_{k} = (v_{ab})_{1 \leq a \leq n, 1 \leq b \leq k}$.
\vspace{0.3cm}\\ 
\textbf{Claim 1.}\; There exist $I \in D_{k-1}$ and $\iota \in I$ satisfying $|v_{\iota k}| = 1$. 
\begin{proof}[Proof of Claim 1]
Suppose $J \in D_{k-1}$ does not satisfy \textbf{Claim 1}. 
We have $|v_{\iota k}| = 1$ for some $\iota \in \llbracket 1, n \rrbracket - J$ since $\|\bv_k\| = 1$.
Write $J \cup \{\iota\} = \{\iota_1, \dots, \iota_k\}$. Then $\iota_s = \iota$ for some $s \in \llbracket 1, k \rrbracket$.

Applying the Laplace expansion along the $k$-th column to $V_k^{J \cup \{\iota \}} $,
\eqlabel{eq_exist_ortho_2}{
	\det V_k^{J \cup \{\iota \}}
	= \sum_{j \ne s} (-1)^{k+j} v_{\iota_j k} \det V_{k-1}^{J \cup \{\iota \} - \{\iota_j \} } + (-1)^{k+s} v_{\iota k} \det V_{k-1}^J.
} Since $J \in D_{k-1}$ and $|v_{\iota k}| = 1$, it follows from \eqref{eq_exist_ortho_2} that 
\eq{
	1 = |v_{\iota k} \det V_{k-1}^J| 
	\leq \max(|\det V_k^{J \cup \{\iota \}}|,\ |\sum_{j \ne s} (-1)^{k+j} v_{\iota_j k} \det V_{k-1}^{J \cup \{\iota \} - \{\iota_j \}}|).
}
Since $|\det V_k^{J \cup \{ \iota \}}| \leq \|\bv_1 \wedge \cdots \wedge \bv_k \| < 1$ and $\|v_k\| = 1$, we have
\eq{
	1	\leq |\sum_{j \ne s} (-1)^{k+j} v_{\iota_j k} \det  V_{k-1}^{J \cup \{\iota \} - \{\iota_j \}}|
	\leq \max_{j \ne s} (|\det  V_{k-1}^{J \cup \{\iota \} - \{\iota_j \}}|),
}
which implies that there exists $\iota_j \in J - \{\iota \}$ such that $|\det V_{k-1}^{J \cup \{\iota \} - \{\iota_j \}}| = 1$. 
Since $|v_{\iota k}| = 1$, $I := J \cup \{\iota \} - \{\iota_j \} \in D_{k-1}$ satisfies \textbf{Claim 1} with $\iota \in I$.

\end{proof}	

Applying \textbf{Claim 1}, pick $I \in D_{k-1}$ with $|v_{t k }|= 1$ for some $t \in I$. 
\vspace{0.3cm}\\ 
\textbf{Claim 2.}\; there exist $\alpha_{1}, \dots, \alpha_{k-1} \in K_\nu$ which satisfy
\eqlabel{exist_ortho_2}{
	\max{(|\alpha_{1}|, \dots, |\alpha_{k-1}|)} = 1;
}
\eqlabel{exist_ortho_3}{
	\|\alpha_{1}\bv_1 + \alpha_{2}\bv_2 + \cdots + \alpha_{k-1}\bv_{k-1} +\bv_k \| = q_\nu^{-l}.
}
\begin{proof}[Proof of Claim 2]
Since $I \in D_{k-1}$, we have $V_{k-1}^I \in \ortho{k-1}$, which implies that each row, particularly the $t$-th row of $V_{k-1}^I$, has norm one by Lemma \ref{ortho_group} with $(V_{k-1}^I)^{tr}$. 
Hence, there exists $s \in \llbracket 1, k-1 \rrbracket$ such that 
\eqlabel{exist_ortho_5}{
|v_{t s}| = 1.
}
Let $\bv_k^I$ be the $(k-1)$-dimensional vector obtained from $\bv_k$ by eliminating indices in $\llbracket 1, n \rrbracket - I$.
Denote 
\eq{
	\boldsymbol{\alpha}^0 = -(V_{k-1}^I)^{-1} \bv_k^I.
} 
Note that we have $\|\bv_k^I \| = 1$ since $1 = |v_{t k }| \leq \|\bv_k^I \| \leq \|\bv_k\| = 1$. By Lemma \ref{norm_inv}, we have
\eq{
	\|\boldsymbol{\alpha}^0 \| = \|- (V_{k-1}^I)^{-1} \bv_k^I\| = \|-\bv_k^I\| = 1.
}
We define
\eq{
	\boldsymbol{\alpha} = \boldsymbol{\alpha}^0 + \pi_\nu^{l} \be_s.
}
For each $j \in \llbracket 1, k-1 \rrbracket$, let $\alpha_j$ be the $j$-th coordinate of $\boldsymbol{\alpha}$. We will show that $\alpha_1, \dots, \alpha_{k-2}$ and $\alpha_{k-1}$ satisfy \textbf{Claim 2}.

Since $\|\pi_\nu^{l} \be_s\| = q_\nu^{-l} < 1 = \|\boldsymbol{\alpha}^0 \|$, we have $\|\boldsymbol{\alpha}\| = \| \boldsymbol{\alpha}^0 + \pi_\nu^{l}\be_s \| = \|\boldsymbol{\alpha}^0\| = 1$ by the ultrametric property, which satisfies \eqref{exist_ortho_2}.
For \eqref{exist_ortho_3}, write $V_{k-1}^{\{j\}} = V_{k-1}^j$ for each $j \in \llbracket 1, n \rrbracket$
, and define a $(k,k)$-matrix
\eq{
	M_j = \left(\begin{matrix} V_{k-1}^I & \bv^I_{k} \\ V_{k-1}^j & v_{jk} \end{matrix} \right).
}
Note that $M_j$ can be transformed to $V_k^{I \cup \{j\}}$ by a row permutation when $j \notin I$, and $M_j$ has duplicated rows when $j \in I$.
By the determinant formula for block matrices and $|\det V_{k-1}^I| =1$, we have
\eqlabel{exist_ortho_6}{
\begin{split}
	|\det M_j|
	&= | \det V_{k-1}^I \det (v_{jk} - V_{k-1}^j (V_{k-1}^I)^{-1} \bv^I_{k}) | \\
	&= | v_{jk} - V_{k-1}^j (V_{k-1}^I)^{-1} \bv^I_{k} | = | v_{jk} + V_{k-1}^j \boldsymbol{\alpha}^0 |.
\end{split}
} On the other hand, we have
\eqlabel{exist_ortho_7}{
	|\det M_j| = |\det V_k^{I \cup \{j\}}| \leq \|\bv_1 \wedge \cdots \wedge \bv_k \| = q_\nu^{-l}
} for any $j \notin I$ and
\eqlabel{exist_ortho_8}{
	\det M_j = 0
} for any $j \in I$ since $M_j$ has at least two identical rows in this case.

By \eqref{exist_ortho_6}, \eqref{exist_ortho_7} and \eqref{exist_ortho_8}, we have
\eqlabel{exist_ortho_9}{
	| v_{jk} + V_{k-1}^j \boldsymbol{\alpha}^0 | \leq q_\nu^{-l} \quad \text{and}	\quad v_{t k} + V_{k-1}^{t} \boldsymbol{\alpha}^0	= 0
} for any $j \in \llbracket 1, n \rrbracket$.

Hence, for each $j \in \llbracket 1, n \rrbracket$, we have
\eq{
	|\sum_{p=1}^{k-1} \alpha_{p}v_{j p} +v_{j k} | =|V_{k-1}^j \boldsymbol{\alpha}^0 + \pi_\nu^l v_{j s} + v_{jk}| \leq \max(|V_{k-1}^j \boldsymbol{\alpha}^0 + v_{jk}|, |\pi_\nu^l v_{j s} |) \leq q_\nu^{-l},
} where the last inequality holds by \eqref{exist_ortho_9} and $\|\bv_{s}\| = 1$. 
For $j = t$, we have
\eq{
	|\sum_{p=1}^{k-1} \alpha_{p}v_{t p} +v_{t k} | =|V_{k-1}^{t} \boldsymbol{\alpha}^0 + \pi_\nu^l v_{t s} + v_{t k}| = |\pi_\nu^l v_{t s}| = q_\nu^{-l}
}
by \eqref{exist_ortho_5} and \eqref{exist_ortho_9}. Hence, $\alpha_1, \dots, \alpha_{k-2}$, and $\alpha_{k-1}$ satisfy \eqref{exist_ortho_3}, which completes \textbf{Claim 2}.
\end{proof}
Define $\alpha_k = 1$ and $\beta_{j} = \alpha_j \pi_\nu^{-l}$, $\forall j \in \llbracket 1, k \rrbracket$. Let $\bv'_{k} = \sum_{j=1}^{k} \beta_j \bv_j \in L$. 
By \eqref{exist_ortho_2} and \eqref{exist_ortho_3}, we have
$\max{(|\beta_{1}|,  \dots, |\beta_{k}|)} = q_\nu^{l}$ and $\|\bv'_{k}\| = 1$. Moreover,
\eq{
\begin{split}
	\|( \bigwedge_{j=1}^{k-1} \bv_j ) \wedge \bv'_{k} \| &= \|( \bigwedge_{j=1}^{k-1} \bv_j ) \wedge (\sum_{p=1}^{k} \beta_p \bv_p ) \| = \|( \bigwedge_{j=1}^{k-1} \bv_j ) \wedge \beta_k \bv_k \| \\
	&= |\beta_k | \|\bigwedge_{j=1}^{k} \bv_j \| = |\beta_k | q_\nu^{-l} = 1.
\end{split}
} Therefore, $\{\bv_1, \bv_2, \cdots, \bv_{k-1}, \bv'_{k} \}$ is a basis of $L$ satisfying Proposition \ref{exist_ortho}.
	
\end{proof}
By Lemma \ref{basis_to_so} and Proposition \ref{exist_ortho}, we can define the orthonormal basis in linear subspaces of $K_{\nu}^{n}$ in the sense of Hadamard's inequality.
\begin{defi}\label{subsp_orth}
	Let $L$ be a linear subspace of $K_{\nu}^{n}$. A normalized basis $\beta = \{\bv_1, \dots, \bv_i\}$ of $L$ is called \textit{orthonormal} if $\|\bv_1 \wedge \cdots \wedge \bv_i\| = 1$.
\end{defi}

By Lemma \ref{basis_to_so} and Lemma \ref{ortho_group}, the orthonormal basis in Definition \ref{subsp_orth} is an orthonormal set (in the sense of Definition \ref{def_ortho}). 
Conversely, for any orthonormal basis $\beta = \{\bv_1, \dots, \bv_n\}$ of $K_\nu^n$, we have by Lemma \ref{ortho_group} that $\|\bigwedge^{n}_{i=1} \bv_i \| = |\det V| = 1$, where $V$ is the matrix with columns $\bv_1, \dots, \bv_n$. Thus, Definition \ref{subsp_orth} is a generalized definition of the orthonormal basis, which always exists for any subspaces by Proposition \ref{exist_ortho}.

\begin{cor}\label{hadamard}
	Let $i, j \in \llbracket 1, n \rrbracket$ with $i+j \leq n$. For any decomposable pair $\bv \in \bigwedge^{i} K_\nu^n$ and $\bw \in \bigwedge^{j} K_\nu^n$, we have $\| \bv \wedge \bw\| \leq \| \bv\| \| \bw\|$.
\end{cor}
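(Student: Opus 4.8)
The plan is to reduce to the standard basis by an orthogonal change of coordinates, after which the inequality becomes an immediate consequence of the ultrametric max norm.

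First I would dispose of the trivial case $\bv = 0$ (then both sides vanish), so assume $\bv \neq 0$. Since $\bv$ is decomposable, write $\bv = \bv_1 \wedge \cdots \wedge \bv_i$ with $\bv_1, \dots, \bv_i$ linearly independent, and set $L = \mathrm{span}_{K_\nu}(\bv_1, \dots, \bv_i)$, an $i$-dimensional subspace. By Proposition \ref{exist_ortho}, $L$ admits an orthonormal basis $\{a_1, \dots, a_i\}$ with $\|a_1 \wedge \cdots \wedge a_i\| = 1$. Expressing $\bv$ in this basis gives $\bv = \lambda\,(a_1 \wedge \cdots \wedge a_i)$ for some $\lambda \in K_\nu$, and taking norms yields $|\lambda| = \|\bv\|$.

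Next I would straighten $L$ to a coordinate subspace. By Lemma \ref{basis_to_so} the frame $\{a_1, \dots, a_i\}$ extends to $\{a_1, \dots, a_n\}$ whose matrix $g$ lies in $\sortho{n} \subseteq \ortho{n}$, so $g.\be_k = a_k$, equivalently $g^{-1}.a_k = \be_k$. Applying $g^{-1}$ and invoking the norm invariance of Lemma \ref{norm_inv},
\[
\|\bv \wedge \bw\| = \|g^{-1}.(\bv \wedge \bw)\| = \|(g^{-1}.\bv) \wedge (g^{-1}.\bw)\|.
\]
Here $g^{-1}.\bv = \lambda\,\be_1 \wedge \cdots \wedge \be_i = \lambda\,\be_{\llbracket 1, i\rrbracket}$, while $\bw' := g^{-1}.\bw$ satisfies $\|\bw'\| = \|\bw\|$, again by Lemma \ref{norm_inv}. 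Writing $\bw' = \sum_{J \in \wp^n_j} d_J \be_J$, the wedge $\be_{\llbracket 1,i\rrbracket} \wedge \be_J$ vanishes unless $J \cap \llbracket 1,i\rrbracket = \emptyset$, in which case it equals $\pm \be_{\llbracket 1,i\rrbracket \cup J}$; since the surviving index sets are pairwise distinct, the max norm gives $\|\be_{\llbracket 1,i\rrbracket} \wedge \bw'\| = \max_{J \cap \llbracket 1,i\rrbracket = \emptyset}|d_J| \leq \max_J |d_J| = \|\bw'\|$. Combining, $\|\bv \wedge \bw\| = |\lambda|\,\|\be_{\llbracket 1,i\rrbracket} \wedge \bw'\| \leq |\lambda|\,\|\bw'\| = \|\bv\|\,\|\bw\|$.

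I expect no serious obstacle once this reduction is in place: the only decision of substance is to use the orthogonal transformation $g$ to move $\bv$ onto $\lambda\,\be_{\llbracket 1,i\rrbracket}$, which requires $\bv$ (but in fact not $\bw$) to be decomposable so that its span is a genuine $i$-dimensional subspace to which Proposition \ref{exist_ortho} and Lemma \ref{basis_to_so} apply. After that, the claimed bound is merely the observation that wedging with distinct standard basis vectors cannot increase the max norm.
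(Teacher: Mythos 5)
Your proof is correct, but it takes a genuinely different route from the paper's. The paper treats $\bv$ and $\bw$ symmetrically: it applies Proposition \ref{exist_ortho} to the spans of \emph{both} factors, writes $\bv = c\,\bx$ and $\bw = c'\,\by$ with $\bx,\by$ wedges of orthonormal bases, and concludes from the single soft estimate $\|\bx\wedge\by\|\leq 1$, which holds because the coordinates of a wedge of unit vectors are sums of products of entries of absolute value at most one; no group element is ever applied. You instead normalize only $\bv$, extend its orthonormal frame to an element $g\in\sortho{n}$ via Lemma \ref{basis_to_so}, and use the $\ortho{n}$-invariance of the norm (Lemma \ref{norm_inv}) to reduce to the explicit coordinate computation $\|\be_{\llbracket 1,i\rrbracket}\wedge\bw'\|\leq\|\bw'\|$. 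Your version buys something the paper's does not: decomposability of $\bw$ is never used, so you have in fact proved the stronger statement that $\|\bv\wedge\bw\|\leq\|\bv\|\,\|\bw\\|$ holds for decomposable $\bv$ and \emph{arbitrary} $\bw\in\bigwedge^{j}K_\nu^n$, and your final step is an equality $\|\be_{\llbracket 1,i\rrbracket}\wedge\bw'\|=\max_{J\cap\llbracket 1,i\rrbracket=\emptyset}|d_J|$ rather than a one-sided bound. The paper's argument is shorter and avoids both Lemma \ref{basis_to_so} and the induced action of $g$ on the exterior algebra, at the cost of needing both factors decomposable. One small point you should make explicit: Lemma \ref{basis_to_so} requires $i\leq n-1$, which is guaranteed here since $i+j\leq n$ and $j\geq 1$.
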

\begin{proof}
Write $\bv = \bv_1 \wedge \dots \wedge \bv_i$ and $\bw = \bw_1 \wedge \dots \wedge \bw_j$. 
If $\bv_1, \dots, \bv_i, \bw_1, \dots, \bw_j $ are linearly dependent over $K_\nu$, 
then it is done since $\bv \wedge \bw = 0$.

Suppose $\bv_1, \dots, \bv_i, \bw_1, \dots, \bw_j $ are linearly independent over $K_\nu$. 
Denote by $L$ and $L'$ the subspaces of $K_{\nu}^{n}$ generated by $\bv_1, \dots, \bv_i$ and $\bw_1, \dots, \bw_j$, respectively. 
By Proposition \ref{exist_ortho}, there exist orthonormal bases $\beta = \{\bx_1, \dots, \bx_i\}$ and $\beta' = \{\by_1, \dots, \by_j\}$ of $L$ and $L'$, respectively. 
Denote $\bx = \bx_1 \wedge \dots \wedge \bx_i$ and $\by = \by_1 \wedge \dots \wedge \by_j$.
Then there exist $c, c' \in K_\nu$ such that
\eqlabel{eq_hadamard_1}{
	\bv = c\bx	\quad \text{and} \quad \bw = c' \by.
} Since $\beta$ and $\beta'$ are orthonormal, we have $\| \bx\| = \| \by\| = 1$, which implies $\| \bv\| = |c|$ and $\| \bw\| = |c'|$ by \eqref{eq_hadamard_1}.
	
On the other hand, since $\beta$ and $\beta'$ are normalized, we have $\|\bx \wedge \by\| \leq 1$ by the ultrametric property.
Hence, it follows from \eqref{eq_hadamard_1} that
\eq{
	\| \bv \wedge \bw\| = \| (c \bx) \wedge (c' \by)\| 
	\leq |c||c'| \|\bx \wedge \by\| 
	\leq |c||c'| 
	= \| \bv\|\| \bw\|.
}
\end{proof}
\begin{cor}
	For each $i \in \llbracket 1, n \rrbracket$, the orthogonal group $\ortho{n}$ acts transitively on the set of decomposable i-vectors up to homothety.
\end{cor}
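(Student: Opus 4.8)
The plan is to reduce transitivity to a single statement: every nonzero decomposable $i$-vector lies, up to a nonzero scalar, in the $\ortho{n}$-orbit of the standard vector $\be_{\llbracket 1, i \rrbracket} = \be_1 \wedge \cdots \wedge \be_i$. Since $\ortho{n}$ is a group, once I show that for an arbitrary nonzero decomposable $\bv$ there exist $g \in \ortho{n}$ and $\lambda \in K_\nu^\times$ with $g.\be_{\llbracket 1, i \rrbracket} = \lambda \bv$, transitivity of the action on homothety classes follows by composing one such map with the inverse of another.

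So fix a nonzero decomposable $i$-vector $\bv = \bv_1 \wedge \cdots \wedge \bv_i$ and let $L = \mathrm{span}_{K_\nu}(\bv_1, \dots, \bv_i)$, an $i$-dimensional subspace. First I would invoke Proposition \ref{exist_ortho} to produce an orthonormal basis $\{\bu_1, \dots, \bu_i\}$ of $L$, i.e. normalized vectors with $\|\bu_1 \wedge \cdots \wedge \bu_i\| = 1$. The key geometric point is that $\bu_1 \wedge \cdots \wedge \bu_i$ and $\bv$ both span the line $\bigwedge^{i} L \subseteq \bigwedge^{i} K_\nu^n$: writing $C$ for the change-of-basis matrix between $\{\bu_j\}$ and $\{\bv_j\}$ gives $\bu_1 \wedge \cdots \wedge \bu_i = (\det C)\, \bv$ with $\det C \in K_\nu^\times$, so these two decomposable vectors are homothetic.

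Assume first $i \le n-1$. Then I would apply Lemma \ref{basis_to_so} to extend $\bu_1, \dots, \bu_i$ to vectors $\bu_1, \dots, \bu_n$ whose matrix $g$ lies in $\sortho{n} \subseteq \ortho{n}$. Since the $j$-th column of $g$ is $\bu_j$, we have $g.\be_{\llbracket 1, i \rrbracket} = (g\be_1) \wedge \cdots \wedge (g\be_i) = \bu_1 \wedge \cdots \wedge \bu_i$, which by the previous paragraph equals $(\det C)\,\bv$. This exhibits the required $g$ with $\lambda = \det C$. The case $i = n$ must be handled separately, because Lemma \ref{basis_to_so} requires $i \le n-1$: here every nonzero decomposable $n$-vector is already a scalar multiple of $\be_{\llbracket 1, n \rrbracket}$, and for any $g \in \ortho{n}$ one has $g.\be_{\llbracket 1, n \rrbracket} = (\det g)\,\be_{\llbracket 1, n \rrbracket}$ with $\det g \ne 0$, so the single homothety class is preserved and transitivity is immediate.

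I do not expect a genuine obstacle: the substantive work—existence of orthonormal bases (Proposition \ref{exist_ortho}) and completion to an element of $\sortho{n}$ (Lemma \ref{basis_to_so})—has already been carried out. The only point needing care is the bookkeeping separating the full-dimensional case $i = n$, where Lemma \ref{basis_to_so} does not directly apply, from the generic case $i < n$.
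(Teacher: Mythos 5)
Your proposal is correct and follows essentially the same route as the paper: both rest on Proposition \ref{exist_ortho} to produce an orthonormal basis of the span, Lemma \ref{basis_to_so} to complete it to an element of $\sortho{n}$, and a separate (trivial) treatment of the case $i = n$. The only cosmetic difference is that you reduce to the single base point $\be_{\llbracket 1, i \rrbracket}$ and compose, whereas the paper connects two arbitrary decomposable vectors directly (and, by normalizing $\|\bv\| = \|\bw\|$ first, additionally records that the homothety factor can be taken of absolute value $1$).
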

\begin{proof}
Let $\bv = \bv_1 \wedge \dots \wedge \bv_i$, and $\bw = \bw_1 \wedge \dots \wedge \bw_i$ be decomposable vectors satisfying $\| \bv\| = \| \bw\|$ with
linearly independent sets $\{\bv_1, \dots, \bv_i\}$ and $\{\bw_1, \dots, \bw_i\}$  over $K_\nu$.
Suppose $1 \leq i < n$.
Let $L$ and $L'$ be the subspaces of $K_{\nu}^{n}$ generated by $\bv_1, \dots, \bv_i$, and by $\bw_{1}, \dots, \bw_i$, respectively. 
By Proposition \ref{exist_ortho}, there exist orthonormal bases $\beta = \{\bx_1, \dots, \bx_i\}$ of $L$ and $\beta' = \{\by_1, \dots, \by_{i}\}$ of $L'$. 
Denote $\bx = \bx_1 \wedge \dots \wedge \bx_i$ and $\by = \by_1 \wedge \dots \wedge \by_i$. 
Since $\beta$ and $\beta'$ are bases, there exist $c, c' \in K_\nu - \{0\}$ such that $\bv = c\bx$ and $\bw = c'\by$.
Since $\beta$ and $\beta'$ are orthonormal and $\| \bv\| = \| \bw\|$, we have
 \eq{ 
 	|c| = \|c \bx \| = \| \bv\| = \| \bw \|= \| c' \by\| = |c'|,
} which implies $|c' c^{-1}| = 1$.

On the other hand, there exist $g, g' \in \sortho{n}$ such that $g \be_j = \bx_j$ and $g' \be_j = \by_j,$ $\forall j \in \llbracket 1, i \rrbracket$ by Lemma \ref{basis_to_so}. Thus, we have
\eq{
	c' {c}^{-1} g' g^{-1}  \bv = c' {c}^{-1} g' g^{-1} (c \bx)
	= c' g' {g}^{-1} (g  \be_{\llbracket 1, i \rrbracket}) 
	= c' (g' \be_{\llbracket 1, i \rrbracket})
	= c' \by 
	= \bw.
} Hence, we obtain $\bw = c' {c}^{-1} g' g^{-1}  \bv$ where $|c' {c}^{-1}| = 1$ and $g' g^{-1} \in \sortho{n}$.

When $i = n$, there exist $d, d' \in K_\nu - \{0\}$ such that $\bv = d \be_{\llbracket 1, n \rrbracket}$ and $\bw = d' \be_{\llbracket 1, n \rrbracket}$. 
Since $\| \bv\| = \| \bw\|$, we have $|d^{-1}d'|=1$. Therefore, we have $\bw = d' d^{-1} I_n \bv$, where $I_n$ is the identity matrix in $\sortho{n}$.
\end{proof}

\section{The submodularity of covolumes in global function fields}

The goal of this section is to prove Theorem \ref{emm_lem_5.6}. Given a lattice $\Delta$ in $K_\nu^n$, 
the sublattices induced by $\Delta$-rational subspaces have the following property.
\begin{defi} Let $\Delta$ be a lattice in a subspace $V$ of $K_\nu^n$.
	A $R_\nu$-submodule $\Lambda$ of $\Delta$ is called \textit{primitive} in $\Delta$ if $\text{span}_{K_\nu}(\Lambda) \cap \Delta = \Lambda$. 
	A finite subset $S$ of $\Delta$ is called \textit{primitive} in $\Delta$ if $S$ is a linearly independent set over $K_\nu$ and $\text{span}_{R_\nu}(S)$ is primitive in $\Delta$.
\end{defi}

Note that given a lattice $\Delta$ and a subspace $L$ in $K_{\nu}^{n}$, 
$L \cap \Delta$ is primitive in $\Delta$ since $\text{span}_{K_\nu}(L \cap \Delta) \subseteq L$.

To calculate covolumes in Theorem \ref{emm_lem_5.6}, we need $R_\nu$-bases of 
modules corresponding to the intersection and sum of $\Delta$-rational subspaces.
In Proposition \ref{primitive_extension}, we will show that every submodule of a lattice $\Delta$ in $K_\nu^n$ is a sublattice of $\Delta$, 
and a $R_\nu$-basis of the primitive lattice can be extended to a $R_\nu$-basis of larger lattice when $R_\nu$ is a principal ideal domain.

Let us begin with the equivalent conditions for the discreteness analogous to the real vector spaces.

\begin{lem}\label{lem_is_lattice}
	Let $\Lambda$ be a non-trivial $R_\nu$-submodule of $K_{\nu}^{n}$. Then the following are equivalent:
	\begin{enumerate}
		\item\label{lem_is_lattice_1} $\Lambda$ is discrete.
		\item\label{lem_is_lattice_2} $\lambda_1 (\Lambda) := \inf_{\bx \in \Lambda - \{0\}} \| \bx\| > 0$.
		\item\label{lem_is_lattice_3} $\Card{(\Lambda \cap S)} $ $< \infty$ for any bounded set $S \subseteq K_{\nu}^{n}$.
		\item\label{lem_is_lattice_4} $\Lambda$ contains a shortest non-zero vector.
	\end{enumerate}
\end{lem}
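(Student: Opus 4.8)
The plan is to prove the four conditions equivalent by running a single cycle of implications $(\ref{lem_is_lattice_1}) \Rightarrow (\ref{lem_is_lattice_2}) \Rightarrow (\ref{lem_is_lattice_3}) \Rightarrow (\ref{lem_is_lattice_4}) \Rightarrow (\ref{lem_is_lattice_1})$. Two features peculiar to the non-archimedean setting drive everything: the value group $|K_\nu^\times| = q_\nu^{\bZ}$ is \emph{discrete}, and every closed ball in $K_\nu^n$ is \emph{compact}, since $\cO_\nu$ is compact (the residue field $k_\nu$ being finite). Only the second of these is genuinely topological, and it enters in exactly one implication.

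The two ``endpoint'' implications are cheap. For $(\ref{lem_is_lattice_1}) \Rightarrow (\ref{lem_is_lattice_2})$, recall that for a subgroup discreteness is equivalent to $0$ being isolated: there is $\epsilon > 0$ with $\{\bx \in \Lambda : \|\bx\| < \epsilon\} = \{0\}$, whence $\lambda_1(\Lambda) \geq \epsilon > 0$. For $(\ref{lem_is_lattice_4}) \Rightarrow (\ref{lem_is_lattice_1})$, if $\bx_0$ is a shortest non-zero vector then any two distinct $\bx, \by \in \Lambda$ satisfy $\|\bx - \by\| \geq \|\bx_0\| > 0$ (as $\bx - \by \in \Lambda \setminus \{0\}$), so $\Lambda$ is uniformly discrete, hence discrete.

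The implication $(\ref{lem_is_lattice_2}) \Rightarrow (\ref{lem_is_lattice_3})$ is where the work lies, and I would handle it by compactness. A bounded set $S$ sits inside a closed ball $B(0, q_\nu^N) = \pi_\nu^{-N}\cO_\nu^n$, which is compact as a scaled copy of $\cO_\nu^n$. I claim $\Lambda \cap B(0, q_\nu^N)$ is finite: were it infinite, it would admit an accumulation point in the compact ball, forcing two distinct lattice points within distance less than $\lambda_1(\Lambda)$; their difference would be a non-zero element of $\Lambda$ of norm strictly below $\lambda_1(\Lambda)$, contradicting $(\ref{lem_is_lattice_2})$. This is the main obstacle, and it rests entirely on the compactness of $\cO_\nu$.

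Finally, for $(\ref{lem_is_lattice_3}) \Rightarrow (\ref{lem_is_lattice_4})$, choose any non-zero $\bx_0 \in \Lambda$ (possible as $\Lambda$ is non-trivial). By $(\ref{lem_is_lattice_3})$ the set $\Lambda \cap B(0, \|\bx_0\|)$ is finite and contains $\bx_0$, so among its non-zero elements one, say $\bw$, has minimal norm. Any $\by \in \Lambda \setminus \{0\}$ either lies in this finite set, so $\|\by\| \geq \|\bw\|$, or satisfies $\|\by\| > \|\bx_0\| \geq \|\bw\|$; thus $\bw$ realizes $\lambda_1(\Lambda)$, giving $(\ref{lem_is_lattice_4})$. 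I would add the remark that $(\ref{lem_is_lattice_2}) \Rightarrow (\ref{lem_is_lattice_4})$ also has a slicker direct proof bypassing compactness: since every nonzero norm lies in the discrete set $q_\nu^{\bZ}$ and is bounded below by $\lambda_1(\Lambda) > 0$, the infimum defining $\lambda_1(\Lambda)$ is automatically attained.
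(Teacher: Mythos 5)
Your proposal is correct and follows essentially the same route as the paper: the same cycle $(\ref{lem_is_lattice_1}) \Rightarrow (\ref{lem_is_lattice_2}) \Rightarrow (\ref{lem_is_lattice_3}) \Rightarrow (\ref{lem_is_lattice_4}) \Rightarrow (\ref{lem_is_lattice_1})$, with the compactness of $\cO_\nu^n$ carrying the implication $(\ref{lem_is_lattice_2}) \Rightarrow (\ref{lem_is_lattice_3})$ via an accumulation-point argument, exactly as in the paper's convergent-subsequence argument. Your closing remark that the discreteness of the value group $q_\nu^{\bZ}$ gives $(\ref{lem_is_lattice_2}) \Rightarrow (\ref{lem_is_lattice_4})$ directly is a valid bonus observation not present in the paper.
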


\begin{proof} 
\eqref{lem_is_lattice_1} $\Rightarrow$ \eqref{lem_is_lattice_2}. 
For each $l \in \bZ$, denote by $B(q_\nu^{-l})$ the open ball of radius $q_\nu^{-l}$ centered at 0 in $K_{\nu}^{n}$. 
Since $\Lambda$ is discrete, there exists $l_0 \in \bZ$ such that $B(q_\nu^{-l_0}) \cap \Lambda = \{0\}$. Hence, we have $\lambda_1 (\Lambda) = \inf_{\bx \in \Lambda - \{0\}} \| \bx\| \geq q_\nu^{-l_0} > 0$.

\eqref{lem_is_lattice_2} $\Rightarrow$ \eqref{lem_is_lattice_3}. 
Since the image of $\| \cdot \|$ is $\{q_\nu^{-l} : l \in \bZ \} \cup \{0\}$, there exists $r \in \bZ$ such that $\lambda_1 (\Lambda) = q_\nu^{-r}$. To prove \eqref{lem_is_lattice_3}, it suffices to check the case $S = B(q_\nu^{-l})$, where $l$ is an integer less than $r$.
Suppose $\Lambda \cap S$ is an infinite set by contradiction. 
Then, there exists a sequence $\{\bx_n\}$ of distinct elements in $\Lambda \cap S$. 
Since $S = B(q_\nu^{-l}) = (\pi_\nu^{l} \cO_\nu)^n$ is compact, there exists a converging subsequence $\{\bx_{n_k}\}$ of $\{\bx_n\}$. 
Since $\{\bx_{n_k}\}$ is a Cauchy sequence of distinct elements in $\Lambda \cap S$, for any small $\eps > 0$, there exist $\bx_{n_s}, \bx_{n_t} \in \Lambda \cap S$ such that $0 < \|\bx_{n_s} - \bx_{n_t}\| < \eps$. 
Since $\Lambda$ is an additive group, we have $\bx_{n_s} - \bx_{n_t} \in \Lambda -  \{0\}$. Hence, we obtain
\eq{
	\lambda_1 (\Lambda) = \inf_{\bx \in \Lambda - \{0\}} \| \bx\| \leq \|\bx_{n_s} - \bx_{n_t}\| < \eps,
} which implies $\lambda_1 (\Lambda) = 0$ contradicting \eqref{lem_is_lattice_2}.

\eqref{lem_is_lattice_3} $\Rightarrow$ \eqref{lem_is_lattice_4}. 
There exists a non-zero element $\by \in \Lambda - \{0\}$ since $\Lambda$ is non-trivial. 
Let $l = -\text{log}_{q_\nu}\|\by\| \in \bZ$. 
By \eqref{lem_is_lattice_3} with a bounded set $S=B(q_\nu^{-l + 1})$, $\Lambda \cap S$ is finite. 
Pick a shortest non-zero vector in $\Lambda \cap S$. By the definition of $S$, this vector is also the shortest in $\Lambda - \{0\}$.

\eqref{lem_is_lattice_4} $\Rightarrow$ \eqref{lem_is_lattice_1}. 
By \eqref{lem_is_lattice_4}, there exists a shortest vector $\bv_0$ in $\Lambda - \{0\}$, which implies $\lambda_1 (\Lambda) = \| \bv_0\| > 0$. 
To prove the discreteness of $\Lambda$, it suffices to show $(\bx + B(\lambda_1 (\Lambda))) \cap \Lambda = \{ \bx\}$ for any $\bx \in \Lambda$ since $\bx + B(\lambda_1 (\Lambda))$ is an open neighborhood of $\bx$.

Let $\bx \in \Lambda$ and suppose there exists $\by \in (\bx + B(\lambda_1 (\Lambda))) \cap \Lambda - \{ \bx\}$ by contradiction. 
Since $\bx, \by \in \Lambda$ and $\bx \ne \by$, we have $\bx- \by \in \Lambda - \{0\}$. 
Since $\by \in \bx + B(\lambda_1 (\Lambda))$, we have $\|\bx - \by\| < \lambda_1 (\Lambda)$, 
which contradicts that $\bv_0$ is a shortest non-zero vector in $\Lambda$.
\end{proof}

We will prove that the image of a lattice under any projection onto a $K_\nu$-subspace deleting some lattice vectors is a lattice again, which is used for the inductive proof of Theorem \ref{emm_lem_5.6}. 
We start by checking its discreteness in Lemma \ref{primitive_extension_1}. 
Given a linearly independent set $\alpha$ over $K_\nu$ and a basis $\beta$ of $K_\nu^n$ extending $\alpha$, denote by $\pi_{\alpha, \beta}$ the projection of $K_\nu^n$ along $\text{span}_{K_\nu} \alpha$ onto $\text{span}_{K_\nu}(\beta - \alpha)$.
\begin{remark}
	Note that while the orthogonal projection was convenient for the proof of the submodularity in $\bR^n$, there is no unique orthogonal projection in non-archimedean vector space, which is why we will use a arbitrary projection, which complicates the notation.
\end{remark}
\begin{lem}\label{primitive_extension_1}
	Let $i,k \in \llbracket 1, n \rrbracket$ with $i \leq k$. 
	Let $\Lambda$ be a lattice in a $k$-dimensional subspace of $K_\nu^n$ and $\bv_1, \dots, \bv_i$ be independent vectors over $K_\nu$ in $\Lambda$. 
	Let $\alpha$ be a basis of $\text{span}_{K_\nu}\{\bv_1, \dots, \bv_i\}$ and $\beta$ be a basis of $K_{\nu}^{n}$ extending $\alpha$. Then $\pi_{\alpha, \beta}(\Lambda)$ is a discrete $R_\nu$-submodule of $K_{\nu}^{n}$.
\end{lem}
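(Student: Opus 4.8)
The plan is to dispatch the algebraic part immediately and then concentrate all the work on discreteness. Since $\pi_{\alpha,\beta}$ is $K_\nu$-linear, hence $R_\nu$-linear, the image $\pi_{\alpha,\beta}(\Lambda)$ is automatically an $R_\nu$-submodule of $W := \text{span}_{K_\nu}(\beta-\alpha) \subseteq K_\nu^n$, so the entire content of the lemma is that this image is discrete. Writing $U := \text{span}_{K_\nu}\{\bv_1,\dots,\bv_i\}$ for the kernel of $\pi_{\alpha,\beta}$, we have $K_\nu^n = U \oplus W$, and each $\bx$ decomposes uniquely as $\bx = \bx_U + \bx_W$ with $\pi_{\alpha,\beta}(\bx) = \bx_W$. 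I would then verify discreteness through condition \eqref{lem_is_lattice_3} of Lemma \ref{lem_is_lattice}, namely that every bounded subset of $\pi_{\alpha,\beta}(\Lambda)$ is finite. If $i = k$, then $\bv_1,\dots,\bv_k$ span the $k$-dimensional space $\text{span}_{K_\nu}(\Lambda)$, so $U = \text{span}_{K_\nu}(\Lambda) \supseteq \Lambda$ and $\pi_{\alpha,\beta}(\Lambda) = \{0\}$ is trivially discrete; hence I may assume $i < k$.

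The crucial point, which I expect to be the main obstacle, is to exploit the hypothesis that the vectors $\bv_1,\dots,\bv_i$ spanning the kernel $U$ actually lie in $\Lambda$. This lets me place a cocompact lattice inside the kernel: set $\Lambda_0 := \text{span}_{R_\nu}\{\bv_1,\dots,\bv_i\} \subseteq \Lambda \cap U$. Because $R_\nu$ is a lattice in $K_\nu$ — equivalently $K_\nu/R_\nu$ is compact, as reflected in the finite covolume computation $\mathrm{Covol}_n(R_\nu^n) = q^{(g-1)n}$ — there is a bounded set $F_0 \subseteq K_\nu$ of representatives for $K_\nu/R_\nu$, so that every $d \in K_\nu$ can be written $d = r + c$ with $r \in R_\nu$ and $c \in F_0$. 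Consequently $F := \{\sum_{j=1}^i c_j \bv_j : c_j \in F_0\}$ is a bounded subset of $U$ into which every element of $U$ can be translated by an element of $\Lambda_0$: writing $\bu = \sum_j d_j \bv_j$ and reducing each $d_j$ modulo $R_\nu$ gives $\bu \in \Lambda_0 + F$. The non-archimedean norm keeps this clean, since $\|\sum_j c_j \bv_j\| \leq \max_j |c_j|\,\|\bv_j\|$ is bounded uniformly over $c_j \in F_0$.

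Granting this, the finiteness of bounded subsets follows quickly. Fix a bounded set $B \subseteq W$ and take any $\bx \in \Lambda$ with $\pi_{\alpha,\beta}(\bx) \in B$. Reducing the kernel-component $\bx_U$ modulo $\Lambda_0$, choose $\lambda \in \Lambda_0 \subseteq \Lambda$ with $\bx_U - \lambda \in F$; then $\bx - \lambda = (\bx_U - \lambda) + \bx_W$ has $(\bx_U - \lambda) \in F$ and $\bx_W = \pi_{\alpha,\beta}(\bx) \in B$, so by the ultrametric inequality $\|\bx - \lambda\| \leq \max(\sup_{\by \in F}\|\by\|,\ \sup_{\bz \in B}\|\bz\|)$ lies in a fixed ball. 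Since $\Lambda$ is a lattice, hence discrete, Lemma \ref{lem_is_lattice} shows that $\Lambda$ meets this ball in a finite set, and because $\lambda \in U = \ker\pi_{\alpha,\beta}$ we have $\pi_{\alpha,\beta}(\bx) = \pi_{\alpha,\beta}(\bx - \lambda)$. Hence $\pi_{\alpha,\beta}(\Lambda) \cap B$ is contained in the image of this finite set and is therefore finite, which is exactly condition \eqref{lem_is_lattice_3}. The only genuine subtlety is the one flagged above: without the hypothesis $\bv_1,\dots,\bv_i \in \Lambda$ the kernel need not contain a cocompact sublattice of $\Lambda$, and then the reduction step — and the conclusion — would fail; everything else is a routine consequence of the ultrametric inequality and the discreteness characterizations of Lemma \ref{lem_is_lattice}.
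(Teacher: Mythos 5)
Your proposal is correct and is essentially the paper's own argument: both rest on the cocompactness of $R_\nu$ in $K_\nu$ (the paper makes this precise via \cite[Lemma 14.3]{BPP}, writing $K_\nu = H + R_\nu + \cO_\nu$ with $H$ finite, which is exactly your bounded set of representatives $F_0$) to translate a lift of each image point by an element of $\text{span}_{R_\nu}\{\bv_1,\dots,\bv_i\}\subseteq\Lambda$ into a fixed bounded set, and then invoke the equivalence of conditions \eqref{lem_is_lattice_1} and \eqref{lem_is_lattice_3} of Lemma \ref{lem_is_lattice}. The paper packages the translation as an injective lift map $f_{\alpha,\beta}:\pi_{\alpha,\beta}(\Lambda)\to\Lambda$ rather than subtracting $\lambda\in\Lambda_0$ from lattice elements, but this is the same idea viewed from the other direction.
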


\begin{proof}
Let $\bx \in \pi_{\alpha, \beta}(\Lambda)$. Choose $\hat{\bx} \in \Lambda$ such that $\pi_{\alpha, \beta}(\hat{\bx}) = \bx$. 
We have $\hat{\bx} - \bx \in \text{ker}(\pi_{\alpha, \beta}) = \text{span}_{K_\nu}\{\bv_1, \dots, \bv_i\}$ since
\eqlabel{primitive_extension_1_1}{
	\pi_{\alpha, \beta}(\hat{\bx} - \bx) = \pi_{\alpha, \beta}(\hat{\bx}) - \pi_{\alpha, \beta}(\bx) = \pi_{\alpha, \beta}(\hat{\bx}) - {\pi_{\alpha, \beta}}^2 (\hat{\bx}) = 0.
} Write $\hat{\bx} - \bx = a_1 \bv_1 + \cdots + a_i \bv_i$ with some $a_1,\dots,a_i \in K_\nu$.
By \cite[Lemma 14.3]{BPP}, $K_\nu / (R_\nu + \cO_\nu)$ is a finite-dimensional vector space over $\bF_q$. 
Hence, there exists a finite subset $H$ of $K_\nu$ such that every $y \in K_\nu$ can be written as $y = y_h + y_r + y_o$ with $y_h \in H$, $y_r \in R_\nu$, and $y_o \in \cO_\nu$. 
Applying it to $a_1, \dots, a_i$, we have $a_j = a_{j, h} + a_{j, r} + a_{j, o}$ for some $a_{j, h} \in H$, $a_{j, r} \in R_\nu$ and $a_{j, o} \in \cO_\nu, \forall j \in \llbracket 1, i \rrbracket$.
Define a lift function $f_{\alpha, \beta} : \pi_{\alpha, \beta}(\Lambda) \rightarrow \Lambda$ by
\eq{
	f_{\alpha, \beta}(\bx) = \bx + \sum_{j = 1}^{i} (a_j - a_{j, r}) \bv_j.
} Note that $f_{\alpha, \beta}(\bx)$ is not canonical since it depends on the choice of $\hat{\bx}$ and $a_{j, r}$.
To check that $f_{\alpha, \beta}$ is well-defined, observe that
\eq{
	f_{\alpha, \beta}(\bx) = (\bx + \sum_{j = 1}^{i} a_j \bv_j) - \sum_{j = 1}^{i} a_{j, r} \bv_j = \hat{\bx} - \sum_{j = 1}^{i} a_{j, r} \bv_j \in \Lambda
} since $\hat{\bx} \in \Lambda$ and $\sum_{j = 1}^{i} a_{j, r} \bv_j \in \text{span}_{R_\nu}\{\bv_1, \dots, \bv_i\} \subset \Lambda$.
For any $\bx \in \pi_{\alpha, \beta}(\Lambda)$, we have by the ultrametric property and $|a_{j,o}| \leq 1$ that
\eqlabel{primitive_extension_1_2}{
\begin{split}
	\|f_{\alpha, \beta}(\bx) - \bx\| &= \|\sum_{j = 1}^{i}(a_{j, h} + a_{j, o})\bv_j\| 
	\leq \max_{j = 1}^{i} (\max(|a_{j, h}|, |a_{j, o}|) \cdot \| \bv_j \|) \\
	&\leq \max(\max_{h \in H}|h|, 1) \cdot \max_{j = 1}^{i} \| \bv_j \|.
\end{split}
}
Denote $C = q_\nu \cdot \max(\max_{h \in H}|h|, 1) \cdot \max_{j = 1}^{i} \| \bv_j \|$.
For any $\bx \in \pi_{\alpha, \beta}(\Lambda)$,
\eq{
\pi_{\alpha, \beta}(f_{\alpha, \beta}(\bx)) = \pi_{\alpha, \beta}(\bx + \sum_{j = 1}^{i} (a_j - a_{j, r}) \bv_j) = \pi_{\alpha, \beta}(\bx) = \bx
} since $\bv_1, \dots, \bv_i \in \text{ker}(\pi_{\alpha, \beta})$ and \eqref{primitive_extension_1_1}. Hence, $f_{\alpha, \beta}$ is injective.

Let $S$ be a bounded set in $K_{\nu}^{n}$. It follows from \eqref{primitive_extension_1_2} that
\eq{
	f_{\alpha, \beta}(S \cap \pi_{\alpha, \beta}(\Lambda)) \subseteq (S+B(C)) \cap f_{\alpha, \beta}(\pi_{\alpha, \beta}(\Lambda)),
} where $B(C)$ is the ball of radius $C$ centered at 0. By the injectivity of $f_{\alpha, \beta}$,
\eq{
\begin{split}
	\Card{(S \cap \pi_{\alpha, \beta}(\Lambda))}  &= \Card{(f_{\alpha, \beta}(S \cap \pi_{\alpha, \beta}(\Lambda)))}  \\
	&\leq \Card{(( S + B(C) ) \cap f_{\alpha, \beta}(\pi_{\alpha, \beta}(\Lambda)) )} \\
	&\leq \Card{(( S + B(C) ) \cap \Lambda )}.
\end{split}
} Since $S+B(C)$ is bounded and $\Lambda$ is discrete, $\Card{(( S + B(C) ) \cap \Lambda )}$ is finite by Lemma \ref{lem_is_lattice} \eqref{lem_is_lattice_1} $\Rightarrow$ \eqref{lem_is_lattice_3}. By Lemma \ref{lem_is_lattice} \eqref{lem_is_lattice_3} $\Rightarrow$ \eqref{lem_is_lattice_1}, $\pi_{\alpha, \beta}(\Lambda)$ is discrete. 
\end{proof}
\begin{lem}\label{independence_k-r}
	Let $\Delta$ be a lattice in $K_{\nu}^{n}$. 
	If a finite subset of $\Delta$ is a linearly independent set over $R_\nu$, then it is a linearly independent set over $K_\nu$.
\end{lem}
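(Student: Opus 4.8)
The plan is to prove the contrapositive: assuming a finite subset $\{\bv_1,\dots,\bv_m\}\subseteq\Delta$ is linearly \emph{dependent} over $K_\nu$, I will produce a nontrivial $R_\nu$-linear relation among its members. The first point to stress is why this needs an argument at all. If the vectors had entries in the fraction field $K$ of $R_\nu$, then $K_\nu$-dependence would force $K$-dependence (ranks are detected by the vanishing of minors, which is insensitive to field extension), and clearing denominators would immediately give an $R_\nu$-relation. Since a lattice in $K_\nu^n$ need not have entries in $K$, this shortcut is unavailable, and the discreteness of $\Delta$ must be used instead. This is the conceptual heart of the lemma.

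First I would relabel so that $\bv_m=\sum_{j=1}^{m-1}d_j\bv_j$ for some $d_j\in K_\nu$, which is possible precisely because the set is $K_\nu$-dependent. Next I would recycle the decomposition used in the proof of Lemma \ref{primitive_extension_1}: by \cite[Lemma 14.3]{BPP} the quotient $K_\nu/(R_\nu+\cO_\nu)$ is finite-dimensional over $\bF_q$, hence finite, so there is a finite set $H\subseteq K_\nu$ with $K_\nu=H+R_\nu+\cO_\nu$. For each $a\in R_\nu$ I would write $a d_j=h_j^{(a)}+r_j^{(a)}+o_j^{(a)}$ with $h_j^{(a)}\in H$, $r_j^{(a)}\in R_\nu$, $o_j^{(a)}\in\cO_\nu$, and set $\bw_a:=a\bv_m-\sum_{j=1}^{m-1}r_j^{(a)}\bv_j$. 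The key observations are that $\bw_a\in\Delta$ (since $a\bv_m\in\Delta$ and each $r_j^{(a)}\bv_j\in\Delta$) and that $\bw_a=\sum_{j=1}^{m-1}(h_j^{(a)}+o_j^{(a)})\bv_j$, so by the ultrametric inequality together with $|o_j^{(a)}|\le 1$ its norm is bounded by the constant $C:=\max(\max_{h\in H}|h|,1)\cdot\max_{j}\|\bv_j\|$, independently of $a$.

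Then I would invoke discreteness: by Lemma \ref{lem_is_lattice} (\eqref{lem_is_lattice_1}$\Rightarrow$\eqref{lem_is_lattice_3}) the intersection of $\Delta$ with the bounded set $\{\bx:\|\bx\|\le C\}$ is finite, so $\{\bw_a:a\in R_\nu\}$ is a finite set. Since $R_\nu$ is infinite, the pigeonhole principle yields $a\ne a'$ with $\bw_a=\bw_{a'}$, that is, $(a-a')\bv_m=\sum_{j=1}^{m-1}(r_j^{(a)}-r_j^{(a')})\bv_j$. As $a-a'\in R_\nu\setminus\{0\}$ and each $r_j^{(a)}-r_j^{(a')}\in R_\nu$, this is a nontrivial $R_\nu$-linear relation among $\bv_1,\dots,\bv_m$, contradicting their $R_\nu$-independence and proving the lemma.

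The main obstacle, and the only place requiring care, is the passage from a single $K_\nu$-dependency to an infinite family of \emph{bounded} lattice vectors that must collide; this is exactly where the finiteness of $K_\nu/(R_\nu+\cO_\nu)$, the discreteness of $\Delta$, and the infinitude of $R_\nu$ combine. Everything else is routine bookkeeping with the ultrametric norm, and the construction of $\bw_a$ deliberately parallels the lift $f_{\alpha,\beta}$ of Lemma \ref{primitive_extension_1}, so the norm estimates established there can be reused almost verbatim.
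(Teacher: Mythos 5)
Your argument is correct, but it takes a genuinely different route from the paper's. The paper first reduces to the case $\Delta = R_\nu^n$ by writing $\Delta = g R_\nu^n$ for some $g \in \GL_n(K_\nu)$ and replacing the given set by its image under $g^{-1}$, and then runs an induction on the cardinality of the set: wedging a putative relation $\sum_j a_j \bv_j = 0$ against $\bv_{\llbracket 1,i\rrbracket - \{\alpha,\beta\}}$ shows that each ratio $a_\alpha/a_\beta$ lies in $K = \mathrm{frac}(R_\nu)$, after which the relation is rescaled into $K$ and denominators are cleared. So, contrary to your opening remark, the ``entries in $K$'' shortcut \emph{is} available --- conjugating by $g^{-1}$ is precisely the paper's first step, and the induction with wedge products is essentially a hands-on verification that rank is insensitive to the field extension $K \subseteq K_\nu$. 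Your proof instead bypasses both the reduction and the induction: from a single $K_\nu$-dependence you build the family $\{\bw_a\}_{a \in R_\nu}$ of lattice vectors confined to a fixed ball, and discreteness (Lemma \ref{lem_is_lattice}) plus pigeonhole over the infinite ring $R_\nu$ forces a collision, which is exactly a nontrivial $R_\nu$-relation. This is sound, including the degenerate case of a single vector, and it has two genuine advantages: it uses only that $\Delta$ is a discrete $R_\nu$-submodule (never that it admits an $R_\nu$-basis), so it applies more generally, and it recycles the $K_\nu = H + R_\nu + \cO_\nu$ decomposition and norm estimate already set up for Lemma \ref{primitive_extension_1}. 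What it costs is the dependence on the metric/discreteness input and on the finiteness of $K_\nu/(R_\nu+\cO_\nu)$, whereas the paper's proof is purely algebraic. The only thing I would ask you to fix is the motivational claim that the reduction to entries in $K$ is unavailable; it is available, and acknowledging that would make clear that your route is a choice rather than a necessity.
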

\begin{proof}
Let $A = \{\bv_1,\dots,\bv_i\}$ be a linearly independent subset of $\Delta$ over $R_\nu$.
Since $\Delta$ has a $R_\nu$-basis, which is a linearly independent set over $K_\nu$, there exists $g \in \GL_n(K_\nu)$ such that $\Delta = g R_\nu^n$. Since $A$ is a linearly independent set if and only if $g^{-1}A (\subset R_{\nu}^{n})$ is, it suffices to show when $\Delta = R_{\nu}^{n}$.

We use induction on $i = \Card{(A)}$. It is clear when $i = 1$. 
Suppose that Lemma \ref{independence_k-r} holds for any subset whose cardinality is less than $i$ $(i \geq 2)$. 
Recall that $\bv_I = \bv_{\iota_1} \wedge \cdots \wedge \bv_{\iota_j}$ for each $I = \{\iota_1, \dots, \iota_j\} \in \wp_{j}^{n}$ and $\bv_I$ = 1 when $I$ is empty.
Suppose $\sum_{j=1}^{i} a_j \bv_j = 0$ for some $a_1,\dots, a_i \in K_\nu$. 
Given each distinct pair $\alpha, \beta \in \llbracket 1, i \rrbracket$, multiplicating $\bv_{\llbracket 1, i \rrbracket - \{\alpha, \beta \}}$ to the above equation,
\eqlabel{independence_k-r_1}{
	a_\alpha \bv_{\llbracket 1, i \rrbracket - \{\beta \}} + (-1)^{\alpha + \beta + 1} a_\beta \bv_{\llbracket 1, i \rrbracket - \{\alpha \}} = 0.
}
Since both $A - \{\bv_\alpha\}$ and $A - \{\bv_\beta\}$ are linearly independent sets over $R_\nu$ of the cardinality less than $i$, they are linearly independent over $K_\nu$ by the induction hypothesis, and thus, $\bv_{\llbracket 1, i \rrbracket - \{\alpha \}}$ and $\bv_{\llbracket 1, i \rrbracket - \{\beta \}}$ are non-zero.
Since $A \subset R_\nu^n$, all coefficients of $\bv_{\llbracket 1, i \rrbracket - \{\alpha \}}$ and $\bv_{\llbracket 1, i \rrbracket - \{\beta \}}$ are in $R_\nu$. Thus, \eqref{independence_k-r_1} implies that $\frac{a_{\alpha}}{a_{\beta}}$ or $\frac{a_{\beta}}{a_{\alpha}}$ is in $K = \text{frac}(R_\nu)$ if $a_{\alpha} \ne 0$ or $a_{\beta} \ne 0$. Hence, if there exists a non-zero $a_j \in K_\nu$ for some $j \in \llbracket 1, i \rrbracket$, then
\eq{
		\frac{a_1}{a_j} \bv_1 + \cdots + \frac{a_i}{a_j} \bv_i
} is a linear combination over $K$.
Since the linear independence over $R_\nu$ and $K$ are equivalent, $\frac{a_1}{a_j} \bv_1 + \cdots + \frac{a_i}{a_j} \bv_i = 0$ implies $a_t = 0, \forall t \in \llbracket 1, i \rrbracket -\{j\}$, and thus, $a_j \bv_j = 0$.
But since $\bv_j \ne 0$, we have $a_j = 0$, which is a contradiction. Therefore, $A$ is a linearly independent set over $K_\nu$. 			 
\end{proof}

By Lemma \ref{independence_k-r}, given a finite subset $\beta$ of the lattice $\Delta$ in $K_\nu^n$,
it suffices to check the linear independence over $R_\nu$, rather than over $K_\nu$, for $\beta$ to be a $R_\nu$-basis of $\text{span}_{R_\nu}{\beta}$. Thus, we will use the abbreviation $R_\nu$-basis as a basis.

While the lemmas so far hold for any global function field, 
the following Proposition needs to assume that the class group of $K$ is trivial. 
Recall that $R_\nu$ is a Dedekind domain, and $K$ is its field of fractions (see \cite[Section 14]{BPP}). 
Note that little is known about number fields of class number one except for imaginary quadratic fields or cyclotomic fields, while the function fields of class number one are completely classified with the explicit list of equations (see \cite{MS} or \cite{SS}).
\begin{prop}\label{primitive_extension}
	Assume that $R_\nu$ is a principal ideal domain. Let $\Delta$ be a lattice in $K_\nu^n$ and $\Lambda$ be a $R_\nu$-submodule of $\Delta$. Then
	\begin{enumerate}
		\item \label{primitive_extension_sub1} $\Lambda$ is a lattice in $\text{span}_{K_\nu}\Lambda$.
		\item \label{primitive_extension_sub2} Any primitive set $P$ in $\Lambda$ can be extended to a basis of $\Lambda$.
	\end{enumerate}
\end{prop}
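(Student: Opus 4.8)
The plan is to reduce the statement to the structure theory of finitely generated modules over the principal ideal domain $R_\nu$; this is the only place the PID hypothesis enters. Since $\Delta$ is a lattice in $K_\nu^n$, it admits an $R_\nu$-basis of $n$ elements, so $\Delta$ is free over $R_\nu$ of rank $n$. By the submodule theorem for free modules over a PID, every $R_\nu$-submodule $\Lambda \subseteq \Delta$ is itself free of some finite rank $r \le n$; fix a basis $\{\bu_1, \dots, \bu_r\}$ of $\Lambda$ over $R_\nu$. By Lemma \ref{independence_k-r}, this set is automatically linearly independent over $K_\nu$, so it is an $R_\nu$-basis in the sense of the paper, and $\dim_{K_\nu}(\text{span}_{K_\nu}\Lambda) = r = \text{rk}(\Lambda)$.

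To finish part \eqref{primitive_extension_sub1} it remains to check discreteness. Since $\Lambda - \{0\} \subseteq \Delta - \{0\}$, we have $\lambda_1(\Lambda) \ge \lambda_1(\Delta) > 0$, positivity of $\lambda_1(\Delta)$ being a consequence of the discreteness of $\Delta$ through Lemma \ref{lem_is_lattice}; discreteness of $\Lambda$ then follows from the same lemma (the case $\Lambda = \{0\}$ being trivial). Combined with the previous paragraph, this shows that $\Lambda$ is a lattice in $\text{span}_{K_\nu}\Lambda$.

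For part \eqref{primitive_extension_sub2}, let $P = \{\bw_1, \dots, \bw_s\}$ be a primitive set in $\Lambda$ and put $N = \text{span}_{R_\nu}(P)$. The crucial observation is that primitivity of $P$ forces the quotient $\Lambda / N$ to be torsion-free: if $a\,\overline{\bx} = 0$ in $\Lambda / N$ for some $a \in R_\nu - \{0\}$ and $\bx \in \Lambda$, then $a\bx \in N$ gives $\bx \in \text{span}_{K_\nu}(P) \cap \Lambda = N$, whence $\overline{\bx} = 0$. As $\Lambda$ is finitely generated by part \eqref{primitive_extension_sub1}, $\Lambda / N$ is a finitely generated torsion-free module over the PID $R_\nu$, hence free. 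Therefore the short exact sequence $0 \to N \to \Lambda \to \Lambda/N \to 0$ splits, and lifting a basis of $\Lambda / N$ to elements $\bu_1, \dots, \bu_t \in \Lambda$ gives $\Lambda = N \oplus \text{span}_{R_\nu}\{\bu_1, \dots, \bu_t\}$. Since $P$ is a basis of the free module $N$, the set $P \cup \{\bu_1, \dots, \bu_t\}$ is a basis of $\Lambda$ over $R_\nu$, and by Lemma \ref{independence_k-r} it is linearly independent over $K_\nu$ as well, hence an $R_\nu$-basis of $\Lambda$ extending $P$.

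The load-bearing steps are the two appeals to PID structure theory — freeness of submodules of $R_\nu^n$ and freeness of finitely generated torsion-free $R_\nu$-modules — together with the translation of the primitivity hypothesis into torsion-freeness of $\Lambda / N$, which is the main conceptual point to get right. Everything else is bookkeeping, with Lemma \ref{independence_k-r} repeatedly upgrading linear independence over $R_\nu$ to linear independence over $K_\nu$.
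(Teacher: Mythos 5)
Your proposal is correct and follows essentially the same route as the paper: part \eqref{primitive_extension_sub2} is the identical argument (primitivity of $P$ gives torsion-freeness of $\Lambda/\text{span}_{R_\nu}P$, hence freeness over the PID and a splitting of the exact sequence), and part \eqref{primitive_extension_sub1} differs only in that you invoke the submodule-of-a-free-module theorem directly where the paper combines Noetherianity with the structure theorem for finitely generated torsion-free modules. Your explicit verification of discreteness of $\Lambda$ via $\lambda_1(\Lambda)\ge\lambda_1(\Delta)$ is a harmless extra step the paper leaves implicit.
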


\begin{proof}

\eqref{primitive_extension_sub1}. Since $R_\nu$ is Noetherian and $\Delta$ is finitely generated, its $R_\nu$-submodule $\Lambda$ is finitely generated.
Since $\Lambda$ is torsion-free as a submodule of free module $\Delta$, $\Lambda$ is a free module by the structure theorem for finitely generated modules over a principal ideal domain. Therefore, $\Lambda$ has a basis over $R_\nu$. By Lemma \ref{independence_k-r}, $\Lambda$ is a lattice in $\text{span}_{K_\nu}\Lambda$.

\eqref{primitive_extension_sub2}. Let $\overline{\Lambda} = \Lambda / \text{span}_{R_\nu}P$. 
We checked that $\Lambda$ is finitely generated in the proof of \eqref{primitive_extension_sub1}, and thus, $\overline{\Lambda}$ is finitely generated as a quotient module of $\Lambda$.

Suppose $r \bv \in \text{span}_{R_\nu}P$ for some $r \in R_\nu$ and $\bv \in \Lambda$. 
If $r \ne 0$, then $r^{-1} \in \text{frac}(R_\nu) = K \subseteq K_\nu$, and thus, $\bv = r^{-1} (r \bv) \in \text{span}_{K_\nu}P$. 
Since $P$ is a primitive set in $\Lambda$, we have $\bv \in \Lambda \cap \text{span}_{K_\nu}P = \text{span}_{R_\nu}P$. Thus, $\overline{\Lambda}$ is torsion-free.
Since $\overline{\Lambda}$ is torsion-free and finitely generated over a principal ideal domain $R_\nu$, $\overline{\Lambda}$ is free by the structure theorem. From the exact sequence $0 \rightarrow \text{span}_{R_\nu}P \rightarrow \Lambda \rightarrow \overline{\Lambda} \rightarrow 0$, we have
$\Lambda \cong \text{span}_{R_\nu}P \oplus \overline{\Lambda}$, and we can find an extended basis of $\Lambda$ using the freeness of $\overline{\Lambda}$.
\end{proof}
We remark that any finitely generated torsion-free module of rank $n$ over a Dedekind domain $R$ is isomorphic to the direct sum of $R^{n-1}$ and a projective module $I$ of rank 1. 
We have $I = R$, and thus, given module has a basis when $R$ is a principal ideal domain, but it fails for general Dedekind domains (See \cite[Chapter VII.4, Proposition 24]{Bou}).

\begin{lem}\label{rational_operation}
	Assume that $R_\nu$ is a principal ideal domain. 
	Let $\Delta$ be a lattice in $K_{\nu}^{n}$, and let $L$ and $M$ be $\Delta$-rational subspaces of $K_{\nu}^{n}$. Then $L + M$ and $L \cap M$ are $\Delta$-rational subspaces of $K_{\nu}^{n}$.
\end{lem}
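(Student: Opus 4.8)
The plan is to reduce both assertions to a single principle supplied by Proposition \ref{primitive_extension}: any $R_\nu$-submodule of $\Delta$ is automatically a lattice in its own $K_\nu$-span. In particular, for \emph{any} subspace $W$ of $K_\nu^n$ the module $W\cap\Delta$ is a lattice in $\text{span}_{K_\nu}(W\cap\Delta)$, and this span is contained in $W$; hence $W$ is $\Delta$-rational if and only if $\text{span}_{K_\nu}(W\cap\Delta)=W$, equivalently $\mathrm{rk}(W\cap\Delta)=\dim(W)$. This reformulation converts the lemma into two dimension counts, and I would record it first as the working criterion.

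For the sum I would argue directly. Since $L$ and $M$ are $\Delta$-rational, fix $R_\nu$-bases of $L\cap\Delta$ and $M\cap\Delta$; their union consists of vectors lying in $(L+M)\cap\Delta$, and it spans $L+M$ over $K_\nu$ because $L\cap\Delta$ spans $L$ and $M\cap\Delta$ spans $M$. Therefore $\text{span}_{K_\nu}\big((L+M)\cap\Delta\big)=L+M$, so by the criterion $(L+M)\cap\Delta$ is a lattice in $L+M$, i.e. $L+M$ is $\Delta$-rational. For the intersection, write $\Lambda_1=L\cap\Delta$ and $\Lambda_2=M\cap\Delta$, so that $\Lambda_1\cap\Lambda_2=(L\cap M)\cap\Delta$ and $\Lambda_1+\Lambda_2\subseteq(L+M)\cap\Delta$. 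Each of $\Lambda_1,\Lambda_2,\Lambda_1\cap\Lambda_2,\Lambda_1+\Lambda_2$ is a lattice in its span by Proposition \ref{primitive_extension}, so its rank equals the cardinality of an $R_\nu$-basis, which in turn equals the dimension of its span over $K=\text{frac}(R_\nu)$. Tensoring the short exact sequence $0\to\Lambda_1\cap\Lambda_2\to\Lambda_1\oplus\Lambda_2\to\Lambda_1+\Lambda_2\to 0$ (the right map being $(a,b)\mapsto a-b$) with the flat $R_\nu$-algebra $K$ gives the rank identity $\mathrm{rk}(\Lambda_1\cap\Lambda_2)=\mathrm{rk}(\Lambda_1)+\mathrm{rk}(\Lambda_2)-\mathrm{rk}(\Lambda_1+\Lambda_2)$.

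It then remains to evaluate the three known ranks. By rationality $\mathrm{rk}(\Lambda_1)=\dim(L)$ and $\mathrm{rk}(\Lambda_2)=\dim(M)$, while $\text{span}_{K_\nu}(\Lambda_1+\Lambda_2)=L+M$ forces $\mathrm{rk}(\Lambda_1+\Lambda_2)=\dim(L+M)$. Substituting into the identity and using the vector-space dimension formula $\dim(L)+\dim(M)=\dim(L+M)+\dim(L\cap M)$, I obtain $\mathrm{rk}\big((L\cap M)\cap\Delta\big)=\dim(L\cap M)$. Since $(L\cap M)\cap\Delta$ lies inside $L\cap M$ and attains full rank there, its span is all of $L\cap M$, so $L\cap M$ is $\Delta$-rational, completing the proof.

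The sum is essentially formal; the content is in the intersection. The crux will be controlling $\Lambda_1\cap\Lambda_2$, which a priori could be too small to span $L\cap M$. The device that rules this out is the rank additivity coming from flatness of $K$ over $R_\nu$, combined with Proposition \ref{primitive_extension}—and it is precisely here that the principal ideal domain hypothesis is used, since it guarantees every submodule of $\Delta$ is free and hence a genuine lattice whose rank is well-behaved. The final squeeze between the rank identity and the subspace dimension formula is what pins the span of the intersection down to all of $L\cap M$.
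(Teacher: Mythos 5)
Your proof is correct, and it differs from the paper's in two ways worth noting. For the sum, you argue directly that $\mathrm{span}_{K_\nu}\big((L+M)\cap\Delta\big)=L+M$ and invoke Proposition \ref{primitive_extension} \eqref{primitive_extension_sub1}; the paper instead proves an auxiliary claim via a matrix-rank computation ($W=UB$ with $B$ over $R_\nu$, so $\mathrm{rank}(W)\leq\mathrm{rank}(B)\leq s$), which amounts to the same dimension count but is less direct than your version. For the intersection, the paper squeezes $\mathrm{rk}(L\cap M\cap\Delta)$ between two inequalities: an upper bound from discreteness, and a lower bound obtained from the homomorphism $L\cap\Delta\to\big((L+M)\cap\Delta\big)/(M\cap\Delta)$, which requires showing $M\cap\Delta$ is primitive in $(L+M)\cap\Delta$ and extending its basis via Proposition \ref{primitive_extension} \eqref{primitive_extension_sub2} to see that the quotient is free of the expected rank. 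Your Mayer--Vietoris sequence $0\to\Lambda_1\cap\Lambda_2\to\Lambda_1\oplus\Lambda_2\to\Lambda_1+\Lambda_2\to 0$ tensored with $K$ gives the rank identity as an equality in one stroke and bypasses part \eqref{primitive_extension_sub2} entirely, needing only part \eqref{primitive_extension_sub1} (and Lemma \ref{independence_k-r} to identify the free rank of these submodules with the paper's notion of rank as $\dim_{K_\nu}$ of the $K_\nu$-span --- a bridge you use implicitly and could state explicitly). The one caution is that your argument computes the rank of $\Lambda_1+\Lambda_2$, which may be a proper submodule of $(L+M)\cap\Delta$; this is harmless since both have $K_\nu$-span equal to $L+M$, but it deserves the sentence you essentially give it. Overall your route is cleaner and slightly more economical in its use of the PID hypothesis.
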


\begin{proof}
	Since $L$ and $M$ are $\Delta$-rational, there exist bases $\beta_L$ of $L\cap \Delta$ and $\beta_M$ of $M \cap \Delta$. 
	Then $\beta_L \cup \beta_M $ is a generating set of $L+M$ in $\Delta$ over $K_\nu$.
	Thus, to prove $L+M$ is $\Delta$-rational, it suffices to prove the following claim: 
	\vspace{0.3cm}\\ 
 \textbf{Claim 1.}\; If a subspace $N$ of $K_{\nu}^{n}$ contains a subset $S_N \subset N \cap \Delta$ satisfying $\text{span}_{K_\nu}S_N = N$, then $N$ is $\Delta$-rational.
 \begin{proof}[Proof of Claim 1] 
 Let $j = \dim (N)$ and $s = \text{rk}(N \cap \Delta)$. 
 Let $\beta_1 = \{\bw_1, \dots, \bw_j\}$ be a basis of $N$ over $K_\nu$ contained in $S_N$ and 
 let $\beta_2 = \{\bu_1, \dots, \bu_s\}$ be a basis of $N \cap \Delta$ by Proposition \ref{primitive_extension} \eqref{primitive_extension_sub1}. 
 Since $\beta_2$ is a linearly independent subset of $N$ over $K_\nu$, we have $s \leq j$.
 Denote by $W$ the $(n, j)$-matrix with columns $\bw_1, \dots, \bw_j$ and $U$ the $(n, s)$-matrix with columns $\bu_1, \dots, \bu_s$. Since $\beta_1 \subset N \cap \Delta$ and $\text{span}_{R_\nu} \beta_2 = N \cap \Delta$, 
 there exists $B \in M_{s,j}(R_\nu)$ such that $W=UB$.
 
 Denote by $\text{rank}_M(X)$ the matrix rank of the matrix $X$. Since $\beta_1$ is a linearly independent set over $K_\nu$, $\text{rank}_M(W) = j$. Hence, we have 
 \eq{
	 j = \text{rank}_M(W) = \text{rank}_M(UB) \leq \text{rank}_M(B) \leq \min(s, j).
 }
 Hence, we have $j \leq s$, and thus, $N$ is $\Delta$-rational.
 \end{proof}
 
Now, let us prove that $L \cap M$ is $\Delta$-rational. Since $L$, $M$, and $L+M$ are $\Delta$-rational and $\dim (L \cap M) = \text{dim}(L) + \text{dim}(M)- \text{dim}(L+M)$, we have
\eqlabel{rational_operation_1}{
\begin{split}
	\text{rk}(L \cap M \cap \Delta) &\leq \text{dim}(L) + \text{dim}(M) - \text{dim}(L+M) \\
	&= \text{rk}(L \cap \Delta) + \text{rk}(M \cap \Delta) - \text{rk}((L+M) \cap \Delta).
\end{split}	
}

On the other hand, define a $R_\nu$-module homomorphism $\phi$ from $L \cap \Delta$ to $\frac{(L+M)\cap \Delta}{M \cap \Delta}$ by $\phi(\bv) = \bv + (M \cap \Delta)$.
Observe that since $\text{span}_{K_\nu}(M \cap \Delta) = M$,
\eq{
	M \cap \Delta = \text{span}_{K_\nu}(M \cap \Delta) \cap \{(L+M) \cap \Delta\},
} which means that $M \cap \Delta$ is primitive in $(L+M) \cap \Delta$.

Let $\beta = \{\bv_1, \dots, \bv_m\}$ be a basis of $M \cap \Delta$. 
$\beta$ is primitive in $(L+M) \cap \Delta$ since $M \cap \Delta$ is. 
By Proposition \ref{primitive_extension} \eqref{primitive_extension_sub2}, $\beta$ can be extended to a basis $\{\bv_1, \dots, \bv_s\}$ of $(L+M) \cap \Delta$. Hence, we have
\eq{
	\frac{(L+M) \cap \Delta}{M \cap \Delta} \cong \bigoplus_{j=m+1}^{s} R_\nu \bv_j,
} which means that $\frac{(L+M) \cap \Delta}{M \cap \Delta}$ is a free $R_\nu$-module, and 
the rank of $\frac{(L+M)\cap \Delta}{M \cap \Delta}$ is $\text{rk}((L+M) \cap \Delta) - \text{rk}(M \cap \Delta)$. Thus, by the rank-nullity theorem for free modules over a principal ideal domain, we have
\eqlabel{rational_operation_2}{
\begin{split}
	\text{rk}(L \cap \Delta) &= \text{rk}(\text{ker}(\phi)) + \text{rk}(\text{Image}(\phi)) \\
	&= \text{rk}(L \cap M \cap \Delta) + \text{rk}(\text{Image}(\phi)) \\
	&\leq \text{rk}(L \cap M \cap \Delta) + \text{rk}(\frac{(L+M)\cap \Delta}{M \cap \Delta}) \\
	&= \text{rk}(L \cap M \cap \Delta) +  \text{rk}((L+M)\cap \Delta) - \text{rk}(M \cap \Delta ).
\end{split}
}
Combining \eqref{rational_operation_1} and \eqref{rational_operation_2} and using the $\Delta$-rationality of $L, M$, and $L+M$,
\eq{
\begin{split}
	\text{rk}(L \cap M \cap \Delta) 
	&= \text{rk}(L \cap \Delta) + \text{rk}(M \cap \Delta) - \text{rk}((L+M) \cap \Delta)  \\
	&= \text{dim}(L) + \text{dim}(M) - \text{dim}(L+M) \\
	&= \text{dim}(L \cap M).
\end{split}
} Therefore, $L \cap M$ is $\Delta$-rational.
\end{proof}

Now, we are ready to prove Theorem \ref{emm_lem_5.6}.
\begin{proof}[proof of Theorem \ref{emm_lem_5.6}]
	
By Lemma \ref{rational_operation}, $L+M$ and $L \cap M$ are $\Delta$-rational. 
Let $i = \text{dim}(L \cap M) = \text{rk}(L \cap M \cap \Delta)$ and $\alpha_0 = \{\bv_1, \dots, \bv_i\}$ be a basis of $L \cap M \cap \Delta$ by Proposition \ref{primitive_extension} \eqref{primitive_extension_sub1}. 
Applying Lemma \ref{basis_to_so} and Proposition \ref{exist_ortho}, take an orthonormal basis $\alpha = \{\bc_1, \dots, \bc_i\}$ of $L \cap M = \text{span}_{K_\nu} \alpha_0$ over $K_{\nu}$ and its extended orthonormal basis $\beta = \{\bc_1, \dots, \bc_n\}$ of $K_{\nu}^{n}$ over $K_{\nu}$. 
By Lemma \ref{primitive_extension_1}, $\pi_{\alpha, \beta}(\Delta)$ is a discrete $R_\nu$-module.

Let $H$ be a $\Delta$-rational subspace in $K_{\nu}^{n}$ containing $L \cap M$. 
Since the kernel of $\pi_{\alpha, \beta}$ is $L \cap M (\subseteq H)$, we have $\pi_{\alpha, \beta}(\Delta) \cap \pi_{\alpha, \beta}(H) = \pi_{\alpha, \beta}(\Delta \cap H)$. Hence,

\eqlabel{emm_lem_5.6_0}{
\begin{split}
	\text{rk}(\pi_{\alpha, \beta}(\Delta) \cap \pi_{\alpha, \beta}(H))
	&= \text{dim}(\text{span}_{K_\nu}\pi_{\alpha, \beta}(\Delta \cap H)  ) \\
	&= \text{dim}(\pi_{\alpha, \beta}(\text{span}_{K_\nu}(\Delta \cap H))  ) \\
	&= \text{dim}(\pi_{\alpha, \beta}(H)  ),
\end{split}
} where the last equality holds since $H$ is $\Delta$-rational. 
\vspace{0.3cm}\\ 
\textbf{Claim 1.}\; $d_{\Delta}(H) = d_{\pi_{\alpha, \beta}(\Delta)}(\pi_{\alpha, \beta}(H))d_{\Delta}(L \cap M)$.
\begin{proof}[Proof of Claim 1] Since $L \cap M \cap \Delta = \text{span}_{R_\nu} \alpha_0$ and $L \cap M = \text{span}_{K_\nu} \alpha_0$, we have $\text{span}_{R_\nu}\alpha_0$ = $\text{span}_{K_\nu}\alpha_0 \cap (H \cap \Delta)$, which implies that $\alpha_0$ is a primitive set in $H \cap \Delta$. 
By Proposition \ref{primitive_extension} \eqref{primitive_extension_sub2}, there exists a basis $\alpha_H = \{\bv_1, \dots, \bv_j\}$ of $H \cap \Delta$ extending $\alpha_0$. Then we have
\eq{
\begin{split}
	d_{\Delta}(L \cap M) &= \|\bv_1 \wedge \cdots \wedge \bv_i  \| = \|\bv_{\llbracket 1, i \rrbracket}  \|; \\
	d_{\Delta}(H) &= \|\bv_1 \wedge \cdots \wedge \bv_j  \| = \|\bv_{\llbracket 1, j \rrbracket}  \|.
\end{split}
}
Note that $\bx - \pi_{\alpha, \beta}(\bx) \in \text{ker}(\pi_{\alpha, \beta})$ = $\text{span}_{K_\nu}\{\bv_1, \dots, \bv_i\}$ for any $\bx \in K_\nu^n$ since $\pi_{\alpha, \beta}$ is a projection. Hence, 
\eqlabel{emm_lem_5.6_1}{
\begin{split}
	d_{\Delta}(H) &= \| \bv_{\llbracket 1, i \rrbracket} \wedge \bigwedge_{s = i+1}^{j}  ( \pi_{\alpha, \beta}(\bv_s) + (\bv_s - \pi_{\alpha, \beta}(\bv_s) ))  \| \\
	&= \| \bv_{\llbracket 1, i \rrbracket} \wedge \bigwedge_{s = i+1}^{j}    \pi_{\alpha, \beta}(\bv_s)   \|.
\end{split}
} Let $\wp = \{J \subset \llbracket i+1, n \rrbracket : \Card{(J)} = j-i\}$.
Since $\bv_1, \dots, \bv_i \in \text{span}_{K_\nu} \alpha$ and $\pi_{\beta}(\bv_{i+1}), \dots$, $\pi_{\beta}(\bv_{j}) \in \text{span}_{K_\nu}(\beta - \alpha)$, using the basis $\beta$, we have
\eqlabel{emm_lem_5.6_2}{
	\bv_{\llbracket 1, i \rrbracket} = a \bc_{\llbracket 1, i \rrbracket},	\ \ 
	\bigwedge_{s = i+1}^{j}    \pi_{\alpha, \beta}(\bv_s) = \sum_{J \in \wp} b_{J} \bc_J
} with some $a, b_J \in K_\nu$ for each $J \in \wp$. It follows from \eqref{emm_lem_5.6_1} and \eqref{emm_lem_5.6_2} that
\eqlabel{emm_lem_5.6_3}{
\begin{split}
	d_{\Delta}(H) 
	&=\| ( a \bc_{\llbracket 1, i \rrbracket}) \wedge (\sum_{J \in \wp} b_{J} \bc_J)  \| 
	=|a| \|(\sum_{J \in \wp} b_{J} \bc_{\llbracket 1, i \rrbracket} \wedge \bc_J)  \| \\
	&=|a| \max_{J \in \wp} |b_J|,
\end{split}
} where the last equality holds by Corollary \ref{onb_norm_inv}.
On the other hand, we have by \eqref{emm_lem_5.6_2} and Corollary \ref{onb_norm_inv} again that
\eqlabel{emm_lem_5.6_4}{
	d_{\Delta}(L \cap M) = \|\bv_{\llbracket 1, i \rrbracket}  \| = \| a \bc_{\llbracket 1, i \rrbracket}\| = |a|.
}

We need to check that $\pi_{\alpha, \beta}(H) \cap \pi_{\alpha, \beta}(\Delta) = \pi_{\alpha, \beta}(H \cap \Delta) $ has a $R_\nu$-basis to calculate $d_{\pi_{\alpha, \beta}(\Delta)}(\pi_{\alpha, \beta}(H))$, 
and we claim that $\pi_{\alpha, \beta}(\alpha_H - \alpha_0)$ is such a basis. 
Suppose that $\sum_{s = i+1}^{j} a_{s} \pi_{\alpha, \beta}(\bv_s) = 0$ for some $a_{i+1}, \dots, a_j \in K_\nu$. 
Then $\sum_{s = i+1}^{j} a_{s} \bv_s $ is an element of $\text{ker}(\pi_{\alpha, \beta}) = \text{span}_{K_\nu} \{\bv_1, \dots, \bv_i\}$. 
Since $\bv_1, \dots, \bv_j$ are linearly independent over $K_\nu$, we have $a_{i+1}=\cdots=a_j = 0$, and thus, $\pi_{\alpha, \beta}(\alpha_H  -  \alpha_0)$ is a linearly independent set over $K_\nu$.

On the other hand, let $\bx \in \pi_{\alpha, \beta}(H \cap \Delta)$. 
Choose $\tilde{\bx} \in H \cap \Delta$ such that $\pi_{\alpha, \beta}(\tilde{\bx}) = \bx$. Since $\alpha_H$ is a basis of $H \cap \Delta$, there exist $b_1, \dots, b_j \in R_\nu$ such that $\tilde{\bx} = \sum_{s=1}^{j} b_s \bv_s$.
Since $\text{ker}(\pi_{\alpha, \beta}) = \text{span}_{K_\nu} \{\bv_1, \dots, \bv_i\}$, we have
\eq{
	\bx = \pi_{\alpha, \beta}(\sum_{s=1}^{j} b_s \bv_s) = \sum_{s=i+1}^{j} b_s \pi_{\alpha, \beta}(\bv_s),
} which implies that $\pi_{\alpha, \beta}(\alpha_H - \alpha_0)$ generates $\pi_{\alpha, \beta}(H \cap \Delta)$ over $R_\nu$. Hence, $\pi_{\alpha, \beta}(\alpha_H - \alpha_0)$ is a $R_\nu$-basis of $\pi_{\alpha, \beta}(H \cap \Delta)$.

In particular, applying $H=K_\nu^n$, we get a $R_\nu$-basis of $\pi_{\alpha, \beta}(\Delta)$, which implies that $\pi_{\alpha, \beta}(\Delta)$ is a lattice in $\pi_{\alpha, \beta}(K_\nu^n)$. 
By \eqref{emm_lem_5.6_0}, $\pi_{\alpha, \beta}(H)$ is $\pi_{\alpha, \beta}(\Delta)$-rational, and thus, we can define $d_{\pi_{\alpha, \beta}(\Delta)}(\pi_{\alpha, \beta}(H))$ by a basis $\pi_{\alpha, \beta}(\alpha_H - \alpha_0)$. 
It follows from \eqref{emm_lem_5.6_2} and Corollary \ref{onb_norm_inv} that
\eqlabel{emm_lem_5.6_5}{
	d_{\pi_{\alpha, \beta}(\Delta)}(\pi_{\alpha, \beta}(H))
	= \|\bigwedge_{s=i+1}^{j} \pi_{\alpha, \beta}(\bv_{s})\|
	=\|\sum_{J \in \wp} b_{J} c_J\| =\max_{J \in \wp} |b_{J}|.
}
Thus, \textbf{Claim 1} follows from \eqref{emm_lem_5.6_3}, \eqref{emm_lem_5.6_4}, and \eqref{emm_lem_5.6_5}.
\end{proof}
Applying $L$, $M$, and $L+M$ to $H$ of \textbf{Claim 1}, it suffices to prove that
\eq{
	d_{\pi_{\alpha, \beta}(\Delta)}(\pi_{\alpha, \beta}(L))d_{\pi_{\alpha, \beta}(\Delta)}(\pi_{\alpha, \beta}(M)) \geq d_{\pi_{\alpha, \beta}(\Delta)}(\pi_{\alpha, \beta}(L+M)).
} Since $L \cap M \subseteq L$ and $L \cap M \subseteq M$, we have
\eq{
	\begin{split}
		\pi_{\alpha, \beta}(L) \cap \pi_{\alpha, \beta}(M) &= \pi_{\alpha, \beta}(L \cap M) = \{0\}; \\
		\pi_{\alpha, \beta}(L) + \pi_{\alpha, \beta}(M) &= \pi_{\alpha, \beta}(L+M),
	\end{split}
} which implies that
\eq{
	\text{dim}(\pi_{\alpha, \beta}(L+M)) = \text{dim}(\pi_{\alpha, \beta}(L)) + \text{dim}(\pi_{\alpha, \beta}(M)).
}
Let $s = \text{dim}(\pi_{\alpha, \beta}(L))$ and $t = \text{dim}(\pi_{\alpha, \beta}(M))$. 
Since $\pi_{\alpha, \beta}(L), \pi_{\alpha, \beta}(M)$, and $\pi_{\alpha, \beta}(L+M)$ are $\pi_{\alpha, \beta}(\Delta)$-rational by \eqref{emm_lem_5.6_0}, their intersection with $\pi_{\alpha, \beta}(\Delta)$ has a basis whose cardinality is the same as the dimension, respectively.
Let $\gamma_L = \{\bx_1, \dots, \bx_s\}$, $\gamma_M = \{\by_1, \dots, \by_t\}$, and $\gamma_{L+M} = \{\bz_1, \dots, \bz_{s+t}\}$ be a basis of $\pi_{\alpha, \beta}(L) \cap \pi_{\alpha, \beta}(\Delta)$, $\pi_{\alpha, \beta}(M) \cap \pi_{\alpha, \beta}(\Delta)$ and $\pi_{\alpha, \beta}(L+M) \cap \pi_{\alpha, \beta}(\Delta)$, respectively.
Then we have
\eqlabel{emm_lem_5.6_6}{
\begin{split}
	d_{\pi_{\alpha, \beta}(\Delta)}(\pi_{\alpha, \beta}(L)) &= \|\bx_{\llbracket 1, s \rrbracket}\|; \\
	d_{\pi_{\alpha, \beta}(\Delta)}(\pi_{\alpha, \beta}(M)) &= \|\by_{\llbracket 1, t \rrbracket}\|; \\
	d_{\pi_{\alpha, \beta}(\Delta)}(\pi_{\alpha, \beta}(L+M)) &= \|\bz_{\llbracket 1, s+t \rrbracket}\|.
\end{split}
} By Corollary \ref{hadamard}, we obtain that
\eqlabel{emm_lem_5.6_7}{
\begin{split}
	\|\bx_{\llbracket 1, s \rrbracket} \wedge \by_{\llbracket 1, t \rrbracket}\|
	&\leq \|\bx_{\llbracket 1, s \rrbracket}\| \cdot \|\by_{\llbracket 1, t \rrbracket}\| \\
	&= d_{\pi_{\alpha, \beta}(\Delta)}(\pi_{\alpha, \beta}(L)) \cdot d_{\pi_{\alpha, \beta}(\Delta)}(\pi_{\alpha, \beta}(M)).
\end{split}
} Since $\pi_{\alpha, \beta}(L+M) \cap \pi_{\alpha, \beta}(\Delta)$ contains $\gamma_L$ and $\gamma_M$, and $\gamma_{L+M}$ is its basis, 
there exists $C = (c_{ij}) \in M_{s+t, s+t}(R_\nu)$ such that
\eqlabel{emm_lem_5.6_8}{
	\bx_i = \sum_{l=1}^{s+t} c_{il}\bz_l \quad \text{and} \quad
	\by_j = \sum_{l=1}^{s+t} c_{(s+j)l}\bz_l  
} for any $i \in \llbracket 1, s \rrbracket$ and $j \in \llbracket 1, t \rrbracket$. By \eqref{emm_lem_5.6_6}, \eqref{emm_lem_5.6_7} and \eqref{emm_lem_5.6_8}, we have
\eqlabel{emm_lem_5.6_10}{
\begin{split}	
	\| \bx_{\llbracket 1, s \rrbracket} \wedge \by_{\llbracket 1, t \rrbracket} \|
		&= |\det C| \cdot \|\bz_{\llbracket 1, s+t \rrbracket} \| = |\det C| \cdot d_{\pi_{\alpha, \beta}(\Delta)}(\pi_{\alpha, \beta}(L+M)) \\
		&\leq d_{\pi_{\alpha, \beta}(\Delta)}(\pi_{\alpha, \beta}(L))  d_{\pi_{\alpha, \beta}(\Delta)}(\pi_{\alpha, \beta}(M)).		
	\end{split}
}
Note that $\det C \in R_\nu$ since $C$ is a matrix over $R_\nu$. 
Since $R_\nu \cap \cO_\nu = \bF_q$ (\cite[Chap 14.2]{BPP}), we have $|\det C| \geq 1$ or $\det C = 0$.
It is immediate that $\gamma_L \cup \gamma_M$ is a linearly independent set over $K_\nu$, and thus, $\det C \ne 0$ from \eqref{emm_lem_5.6_8}.
Hence, it follows from \eqref{emm_lem_5.6_10} that
\eq{
	d_{\pi_{\alpha, \beta}(\Delta)}(\pi_{\alpha, \beta}(L)) \cdot d_{\pi_{\alpha, \beta}(\Delta)}(\pi_{\alpha, \beta}(M))
	\geq d_{\pi_{\alpha, \beta}(\Delta)}(\pi_{\alpha, \beta}(L+M)).
}
\end{proof}

\end{document}